\numberwithin{equation}{section}
\def\hangbox to #1 #2{\vskip3pt\hangindent #1\noindent \hbox to #1{#2}$\!\!$}
\newtheorem{thm}{Theorem}[section]
\newtheorem{lem}[thm]{Lemma}
\newtheorem{cor}[thm]{Corollary}
\newtheorem{prop}[thm]{Proposition}
\theoremstyle{definition}
\newtheorem{problem}[thm]{Problem}
\theoremstyle{remark}
\newtheorem{rem}[thm]{Remark}
\def\N{{\mathbb N}}
\def\R{{\mathbb R}}
\def\Z{{\mathbb Z}}
\def\sfrac#1#2{\kern.1em\raise.5ex\hbox{$#1$}
        \kern-.1em/\kern-.05em\lower.25ex\hbox{$#2$}}
\def\vp{\varepsilon}
\newcommand{\fw}{\text{\fw}}
\begin{document}
\allowdisplaybreaks
\title{A Schauder basis for $L_2$ consisting of non-negative functions}

\author{ Daniel Freeman}
\address{Department of Mathematics and Statistics\\
St Louis University\\
St Louis MO 63103  USA
} \email{daniel.freeman@slu.edu}

\author{ Alexander M. Powell}
\address{Department of Mathematics\\
Vanderbilt University\\
Nashville TN 37240  USA
} \email{alexander.m.powell@vanderbilt.edu}

\author{ Mitchell A. Taylor}
\address{Department of Mathematics\\
University of California, Berkeley\\
Berkeley CA 94720~USA
} \email{mitchelltaylor@berkeley.edu}

\thanks{
The first author was supported by grant 353293 from the Simons Foundation.}

\thanks{2010 \textit{Mathematics Subject Classification}: 46B03, 46B15, 46E30, 42C15}

\maketitle
 
 \begin{abstract}
We prove that $L_2(\R)$ contains a Schauder basis  of non-negative functions. Similarly, $L_p(\R)$ contains a Schauder basic sequence of non-negative functions such that $L_p(\R)$ embeds into the closed span of the sequence.  We prove as well that if $X$ is a separable Banach space with the bounded approximation property, then any set in $X$ with dense span contains a quasi-basis (Schauder frame) for $X$. Furthermore, if $X$ is a separable Banach lattice with a bibasis then any set in $X$ with dense span contains a u-frame.
\end{abstract}

 \section{Introduction}
Given $1\leq p<\infty$, we are interested in what coordinate systems can be formed for  $L_p(\R)$ using only non-negative functions.  The most desirable coordinate systems are unconditional, but this property is too strong to impose on this situation.  Indeed, for all $1\leq p<\infty$, $L_p(\R)$ does not have an unconditional Schauder basis or even unconditional quasi-basis (Schauder frame) consisting of non-negative functions \cite{PS}.  In particular, both the positive and negative parts of an unconditonal Schauder basis must have infinite weight \cite{NV}.
 
 When considering subspaces of $L_p(\R)$, it is clear that any normalized sequence of  non-negative functions with disjoint support will be 1-equivalent to the unit vector basis of $\ell_p$.  This trivial method is essentially the only way to build an unconditional Schauder basic sequence of non-negative functions in $L_p(\R)$, as every normalized unconditional Schauder basic sequence of non-negative functions in $L_p(\R)$ is equivalent to the unit vector basis for $\ell_p$ \cite{JS}.  Likewise, if $(f_j,g_j^*)_{j=1}^\infty$ is an unconditional quasi-basis  for a closed subspace $X$ of $L_p(\R)$ and $(f_j)_{j=1}^\infty$ is a sequence of non-negative functions then $X$ embeds into $\ell_p$ \cite{JS}. 

The results for coordinate systems formed by non-negative functions are very different when one allows for conditionality.  Indeed, for all $1\leq p<\infty$, $L_p(\R)$ has a Markushevich basis consisting of non-negative functions, and $L_p(\R)$ has a quasi-basis whose vectors consist of non-negative functions \cite{PS}.  
For the case of conditional Schauder bases, Johnson and Schechtman constructed a Schauder basis for  $L_1(\R)$ consisting of non-negative functions \cite{JS}.  Their construction relies heavily on the structure of $L_1$, and the problem on the existence of conditional Schauder bases for $L_p(\R)$ remained open for all $1<p<\infty$.  Our main result is to provide a construction for a Schauder basis of $L_2(\R)$ consisting of non-negative functions.   For the remaining cases $1<p<\infty$ with $p\neq 2$, we are not able to build a Schauder basis for the whole space.  However, we prove that for all $1<p<\infty$ there exists a Schauder basic sequence $(f_j)_{j=1}^\infty$ of non-negative functions in $L_p(\R)$ such that $L_p(\R)$ embeds into the closed span of $(f_j)_{j=1}^\infty$.

There are interesting comparisons between results on coordinate systems of non-negative functions for $L_p(\R)$ and results  on coordinate systems of translations of a single function.  As is the case for non-negative functions, there does not exist an unconditional Schauder basis for $L_p(\R)$ consisting of translations of a single function (\cite{OZ} for $p=2$, \cite{OSSZ} for $1<p\leq 4$, and \cite{FOSZ} for $4< p$).  On the other hand, for the range $2<p<\infty$ there does exist a sequence  $(f_j)_{j=1}^\infty$  of translations of a single function in $L_p(\R)$ and a sequence of functionals $(g^*_j)_{j=1}^\infty$ in $L_p(\R)^*$ such that $(f_j,g_j^*)_{j=1}^\infty$ is an unconditional Schauder frame for $L_p(\R)$ \cite{FOSZ}. The corresponding result for the range $1<p<2$ is unknown, but for $1<p\leq2$ the sequence of functionals $(g^*_j)_{j=1}^\infty$ in $L_p(\R)^*$ cannot be chosen to be semi-normalized \cite{BC}.
We take a unifying approach and prove that for all $1\leq p<\infty$, there exists a Schauder frame $(f_j,g_j^*)_{j=1}^\infty$ of $L_p(\R)$ such that $(f_j)_{j=1}^\infty$ is a sequence of translations of a single non-negative function.  We obtain this result by first proving that if $X$ is any separable Banach space with the bounded approximation property and  $D\subseteq X$ has dense span in $X$ then there exists a Schauder frame for $X$ whose vectors are elements of $D$. We extend this result further in the case that $X$ is a Banach lattice. Schauder frames give convergence in norm for partial sums, but in Banach lattices we can also require convergence in order for partial sums. We extend our theorem  in this direction to prove that if $X$ is a Banach lattice with a bibasis and $D\subseteq X$ has dense span then there exists a u-frame for $X$ whose vectors are elements of $D$.  We define Schauder frames, bibases and u-frames in Section \ref{S:frame}.

\section{A positive Schauder basis for $L_2(\R)$}\label{S:basis_2}

Given a separable infinite dimensional Banach space $X$, a sequence of vectors $(x_j)_{j=1}^\infty$ in $X$ is called a {\em Schauder basis} of $X$ if for all $x\in X$ there exists a unique sequence of scalars $(a_j)_{j=1}^\infty$ such that 
\begin{equation}\label{E:basis}
x=\sum_{j=1}^\infty a_j x_j.  
\end{equation}
A Schauder basis $(x_j)_{j=1}^\infty$ is called {\em unconditional } if the series in \eqref{E:basis} converges in every order.   If $(x_j)_{j=1}^\infty$ is a Schauder basis then there exists a unique sequence of bounded linear functionals {\em $(x_j^*)_{j=1}^\infty$ called the {\em }biorthogonal functionals of $(x_j)_{j=1}^\infty$ } such that $x_j^*(x_j)=1$ for all $j\in\N$ and $x_j^*(x_i)=0$ for all $j\neq i$. A sequence of vectors is called {\em basic} if it is a Schauder basis for its closed span.  A basic sequence $(x_j)$ is called $C$-basic for some constant $C>0$ if for all $m\leq n$ we have that
\begin{equation}\label{E:C_basis}
\left\|\sum_{j=1}^m a_j x_j\right\|\leq C \left\|\sum_{j=1}^n a_j x_j\right\|\quad\textrm {for all sequences of scalars }(a_j)_{j=1}^n. 
\end{equation}
It follows from the uniform boundedness principle that every basic sequence is $C$-basic for some constant $C$.  The least value $C$ such that a sequence $(x_j)$ is $C$-basic is called the {\em basis constant} of $(x_j)$.

 Question 9.1 in \cite{PS} asked if given $1\leq p<\infty$, does there exist a Schauder basis for $L_p(\R)$ consisting of non-negative functions?  This was recently solved for $L_1(\R)$ \cite{JS}, but all other cases remained open.
Our goal in this section is to give a procedure for creating a Schauder basis for $L_2(\R)$ formed of non-negative functions.  Will be using the terms positive and non-negative interchangeably as the set of non-negative functions in $L_p(\R)$ is the positive cone of $L_p(\R)$ when viewed as a Banach lattice.

There does not exist an unconditional positive Schauder basis for $L_p(\R)$ for any $1\leq p<\infty$ \cite{PS}. Thus, any positive Schauder basis we create must necessarily be conditional, and the property of conditionality will factor heavily into our construction.  The following lemma is our main tool, and it is based on  a classical construction for a conditional Schauder basis for $\ell_2$ (see for example pages 235-237 in \cite{AK}).

\begin{lem}\label{L:conditional}
Let $\vp>0$ and $1\geq c>0$.  There exists $N\in\N$ and a sequence $(x_j)_{j=1}^{2N}$ in $\ell_2(\Z_{2N})\oplus \ell_2(\Z_{2N})$ such that 
\begin{enumerate}
\item $(x_j)_{j=1}^{2N}$ is $(1+\vp)$-basic.
\item The orthogonal projection of $(0)_{j=1}^{2N}\oplus(\frac{1}{\sqrt{N}},\frac{c}{\sqrt{N}},\frac{1}{\sqrt{N}},\frac{c}{\sqrt{N}}...)_{j=1}^{2N}$ onto the span of $(x_j)_{j=1}^{2N}$ has norm at most $\vp$.
\item The distance from $(0)_{j=1}^{2N}\oplus(\frac{-c}{\sqrt{N}},\frac{1}{\sqrt{N}},\frac{-c}{\sqrt{N}},\frac{1}{\sqrt{N}}...)_{j=1}^{2N}$ to the span of $(x_j)_{j=1}^{2N}$ is at most $\vp$.
\end{enumerate}
\end{lem}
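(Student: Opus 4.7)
The plan is to exploit the orthogonality $u\perp v$ in $\ell_2(\Z_{2N})$, where $u=(1/\sqrt{N},c/\sqrt{N},1/\sqrt{N},c/\sqrt{N},\ldots)$ and $v=(-c/\sqrt{N},1/\sqrt{N},-c/\sqrt{N},1/\sqrt{N},\ldots)$. A direct computation gives $\langle u,v\rangle=\sum_{k=1}^{N}N^{-1}(-c+c)=0$. Given this, my strategy is to construct $(x_j)_{j=1}^{2N}$ as an orthogonal family whose span is a $2N$-dimensional subspace of $\ell_2(\Z_{2N})\oplus\ell_2(\Z_{2N})$ that contains $0\oplus v$ and is orthogonal to $0\oplus u$. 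An orthogonal family is automatically $1$-basic, so condition (1) comes for free, and (2)--(3) will then hold with equality rather than with only $\vp$-slack.

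For a concrete realization, fix any $N\geq 1$ and let $(e_j)_{j=1}^{2N}$ and $(f_j)_{j=1}^{2N}$ denote the standard bases of the two summands. Define $x_j=e_j\oplus 0$ for $j=1,\ldots,N$, and $x_{N+j}=0\oplus N^{-1/2}(-c\,f_{2j-1}+f_{2j})$ for $j=1,\ldots,N$. The first $N$ vectors are supported on distinct coordinates of the first summand, the last $N$ on disjoint coordinate pairs of the second summand, and the two groups occupy orthogonal summands; hence $(x_j)_{j=1}^{2N}$ is orthogonal and therefore $1$-basic.

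To verify (2), each $x_j$ is orthogonal to $0\oplus u$: for $j\leq N$ this is automatic, and for $j=N+k$ one computes $\langle x_{N+k},0\oplus u\rangle=N^{-1/2}(-c\,u_{2k-1}+u_{2k})=N^{-1}(-c+c)=0$. Thus $0\oplus u$ is orthogonal to the whole span and its projection vanishes. For (3), direct summation yields $\sum_{j=1}^{N}x_{N+j}=0\oplus v$, so $0\oplus v$ lies in the span itself.

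The main point to notice is that the lemma as stated is essentially immediate from $u\perp v$, and the $\vp$-slack in the statement is not actually required. The reference to the classical conditional basis of $\ell_2$ in \cite{AK} seems to me to be pointing at the spirit of the broader construction --- the first $\ell_2(\Z_{2N})$ copy is being prepared to play the role of a ``witness'' encoding ordering, which will matter when this lemma is iterated to build a genuinely conditional positive Schauder basis for $L_2(\R)$ --- rather than signalling a difficulty within this individual statement. If downstream one insists on $(x_j)$ having a more rigid structure, e.g.\ $x_j=e_j+(\text{small second-copy correction})$, then the partial-sum analysis characteristic of conditional-basis constructions would become the real technical obstacle, but for the lemma as stated the orthogonal family above suffices.
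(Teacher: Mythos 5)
Your construction does verify the three conditions as literally printed, and your observation that $u\perp v$ makes the printed statement trivially satisfiable by an orthogonal family is a fair catch of an imprecision in the statement. But it misses the lemma's actual content, which is that the vectors $x_j$ must be \emph{non-negative} (lie in the positive cone of $\ell_2(\Z_{2N})\oplus\ell_2(\Z_{2N})$). That is how the lemma is invoked in the proof of Theorem \ref{T:basisL2}: there one takes ``$(x_j)_{j=1}^{2N}$ in $\ell_2(\Z_{2N}\oplus\Z_{2N})_+$'' and pushes them forward by the positivity-preserving isometry $\Psi$ to obtain the new basis vectors $z_{N_k+j}=\Psi(x_j)$, which must be non-negative functions in $L_2(\R)$. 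Every coordinate of the vectors built in the paper's proof is $\geq 0$ (the entries are $1$, $a_i$, $\vp a_i$, $\vp c$). Your vectors $x_{N+j}=0\oplus N^{-1/2}(-c\,f_{2j-1}+f_{2j})$ have a negative coordinate, so $\Psi(x_{N+j})$ would not be a non-negative function and the downstream construction collapses; your lemma, though true, cannot play the role this lemma plays in the paper.

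Moreover, once positivity is imposed your route cannot be repaired: non-negative pairwise orthogonal vectors in $\ell_2$ have pairwise disjoint supports, hence form a $1$-unconditional family, and for such a family conditions (2) and (3) are incompatible. Indeed $u\geq c|v|$ coordinatewise, so if $v$ lies within $\vp$ of the span of a positive disjointly supported family, a short computation with the coordinatewise projections shows the projection of $u$ has norm at least about $c(\|v\|-2\vp)$ with $\|v\|=\sqrt{1+c^2}$, which is bounded away from $0$ for small $\vp$. This is exactly the phenomenon the paper is working around (positive unconditional sequences are too rigid), and it is why the paper's proof builds a highly \emph{conditional} positive family: the long, slowly decaying tail $(a_j)$ supplied by Lemma \ref{L:sum_p} is what makes the alternating-sign combination $y$ nearly reachable while the same-sign vector $x$ is nearly orthogonal to the span, and the bulk of the paper's proof (the estimates for $y_{1,1},y_{1,2},y_{2,1},y_{2,2}$) is precisely the partial-sum analysis needed to keep the basis constant at $1+4\vp$ despite this conditionality. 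So the ``real technical obstacle'' you deferred at the end of your proposal is in fact the whole proof; the $\vp$-slack and the conditional structure are forced by positivity, not optional.
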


\begin{proof}

 Let $N\in\N$ and $(a_j)_{j=1}^N\subseteq (0,\infty)$ such that $\sum_{j=1}^N j a_j^2<\vp^2$ and $\sum_{j=1}^N a_j>\vp^{-2}c^{-2}$.   We prove that such a sequence exists later in Lemma \ref{L:sum_p}.   Consider the space $\ell_2(\Z_{2N})\oplus \ell_2(\Z_{2N})$.  We use the notation $\ell_2(\Z_{2N})$ instead of $\ell_2^{2N}$ because we will be making use of the cyclic structure of $\Z_{2N}$.   Let $T_1$ be the right shift operator on $\ell_2(\Z_{2N})\oplus \ell_2(\Z_{2N})$.  That is,
$$T_1(a_1,a_2,...,a_{2N})\oplus(b_1,b_2,...,b_{2N})=(a_{2N},a_1,a_2,...,a_{2N-1})\oplus(b_{2N},b_1,b_2,...,b_{2N-1})
$$
For $m\in\N$, we let $T_m=(T_1)^m$.  We let $(e_j)_{j=1}^{2N}$ be the unit vector basis of  $\ell_2(\Z_{2N})\oplus 0$ and  $(f_j)_{j=1}^{2N}$ be the unit vector basis of  $0\oplus\ell_2(\Z_{2N})$.  We let $x_1\in \ell_2(\Z_{2N})\oplus \ell_2(\Z_{2N})$ be the vector 
$x_1= e_1 + \sum_{j=1}^{N} a_{j} e_{2j}+ \sum_{j=1}^N \vp a_{j} f_{2j} $ and $x_2=e_2+\vp c f_1$.  For all $1\leq n< N$, we let $x_{2n+1}= T_{2n} x_1$ and $x_{2n+2}= T_{2n} x_2$.  That is,

\setlength{\arraycolsep}{1pt} 
\medmuskip = 1mu 
\begin{center}
$\begin{array}{c c c c c c c c c c c c c c c c c c c c c c c c c c c c c c c c c c c c c c}
x_1&=&(&1, &a_1,&0,&a_2,&0,&a_3,&...,&a_{N-1},&0,&a_N&)&\oplus&(&0, &\vp a_1,&0,&\vp a_2,&0,&...&)&\\
x_2&=&(&0,&1,&0,&0,&0,&0,&...&0,&0&0&)&\oplus&(&\vp c,&0,&0,&0,&0,&...&)&\\
x_3&=&(&0,&a_N,&1,&a_1,&0,&a_2,&...,&a_{N-2},&0,&a_{N-1}&)&\oplus&(&0,&\vp a_N,&0,&\vp a_1,&0,&...&)&\\
x_4&=&(&0,&0,&0,&1,&0,&0,&...&0,&0&0&)&\oplus&(&0,&0,&\vp c,&0,&0,&...&)&\\
x_5&=&(&0,&a_{N-1},&0,&a_N,&1,&a_1,&...,&a_{N-3},&0,&a_{N-2}&)&\oplus&(&0,&\vp a_{N-1},&0,&\vp a_N,&0,&...&)&\\
x_6&=&(&0,&0,&0,&0,&0,&1,&...&0,&0&0&)&\oplus&(&0,&0,&0,&0,&\vp c,&...&)&\\

&&&&&&&&&&\vdots&&&&&&&&&&&&\vdots\\
x_{2N-3}&=&(&0,&a_3,&0,&a_4,&0,&a_5,&...&a_1,&0,&a_2&)&\oplus&(&0,&\vp a_3,&0,&\vp a_4,&0,&...&)&\\
x_{2N-2}&=&(&0,&0,&0,&0,&0,&0,&...&1,&0,&0&)&\oplus&(&0,&0,&0,&0,&0,&...&)&\\
x_{2N-1}&=&(&0,&a_2,&0,&a_3,&0,&a_4,&...&a_N,&1,&a_1&)&\oplus&(&0,&\vp a_2,&0,&\vp a_3,&0,&...&)&\\
x_{2N}&=&(&0,&0,&0,&0,&0,&0,&...&0,&0,&1&)&\oplus&(&0,&0,&0,&0,&0,&...&)&\\
\end{array}$
\end{center}

Let $x=\sum_{j=1}^N \frac{1}{\sqrt{N}}f_{2j-1}+\sum_{j=1}^N\frac{c}{\sqrt{N}}f_{2j}$ and  $y=\sum_{j=1}^N \frac{c}{\sqrt{N}}f_{2j-1}+\sum_{j=1}^N\frac{-1}{\sqrt{N}}f_{2j}$
We will prove that this sequence $(x_j)_{j=1}^{2N}$ satisfies:

\begin{enumerate}
\item[(a)] $(x_j)_{j=1}^{2N}$ is $(1+4\vp)$-basic.
\item[(b)] The orthogonal projection of $x$ onto the span of $(x_j)_{j=1}^{2N}$ has norm at most $3 c \vp$.
\item[(c)] The distance from $y$ to the span of $(x_j)_{j=1}^{2N}$ is at most $\vp$.
\end{enumerate}

We first prove $(b)$.  We let  $Px$ be the orthogonal projection of $x$ onto the span of $(x_j)_{j=1}^{2N}$.   By symmetry, $Px$ will have the form $\sum_{j=1}^N a x_{2j-1}+\sum_{j=1}^N b x_{2j}$ for some $a,b\in \R$.  One can check that if $a=0$ then $\|Px\|=\vp c(1+\vp^2 c^2)^{-1/2}<3\vp c$.  We now assume that $a\neq0$.  Thus, 
$$\|Px\|=\frac{\langle x, \sum_{j=1}^N a x_{2j-1}+\sum_{j=1}^N b x_{2j}\rangle}{\|\sum_{j=1}^N a x_{2j-1}+\sum_{j=1}^N b x_{2j}\|}
=\max_{\beta\in\R}\frac{\langle x, \sum_{j=1}^N  x_{2j-1}+\sum_{j=1}^N \beta x_{2j}\rangle}{\|\sum_{j=1}^N x_{2j-1}+\sum_{j=1}^N \beta x_{2j}\|}
$$
By taking the derivative with respect to $\beta$, the maximum will be obtained when
\begin{equation}\label{E:critical}
\frac{d}{d\beta} \Big\langle x, \sum_{j=1}^N  x_{2j-1}+\sum_{j=1}^N \beta x_{2j}\Big\rangle{\Big\|\sum_{j=1}^N x_{2j-1}+\sum_{j=1}^N \beta x_{2j}\Big\|}
=\frac{d}{d\beta}{\Big\|\sum_{j=1}^N x_{2j-1}+\sum_{j=1}^N \beta x_{2j}\Big\|} \Big\langle x, \sum_{j=1}^N  x_{2j-1}+\sum_{j=1}^N \beta x_{2j}\Big\rangle.
\end{equation}

Let $A=\sum_{j=1}^N a_j$.  Then we get the following simplified expansion.
\begin{align*}
\sum_{j=1}^N x_{2j-1}+\sum_{j=1}^N \beta x_{2j}
&=\sum_{j=1}^N e_{2j-1}+\sum_{j=1}^{N}\Big(\beta+\sum_{i=1}^N a_i\Big)e_{2j}
+\sum_{j=1}^{N}\vp c\beta f_{2j-1}+\sum_{j=1}^N\Big(\vp\sum_{i=1}^N a_i\Big)f_{2j} \\
&=\sum_{j=1}^N  e_{2j-1}+\sum_{j=1}^{N}(\beta+A)e_{2j}
+\sum_{j=1}^{N}\vp c \beta f_{2j-1}+\sum_{j=1}^N \vp A f_{2j}\\
\end{align*}

This gives,
\begin{equation}\label{E:norm}
\Big\|\sum_{j=1}^N x_{2j-1}+\sum_{j=1}^N \beta x_{2j}\Big\|=\big(N+N(\beta+A)^2+N\vp^2 c^2\beta^2+N\vp^2A^2\big)^{1/2}
\end{equation}
\begin{equation}
\frac{d}{d\beta}\Big\|\sum_{j=1}^N x_{2j-1}+\sum_{j=1}^N \beta x_{2j}\Big\|=\big(N+N(\beta+A)^2+N\vp^2c^2\beta^2+N\vp^2A^2\big)^{-1/2}\big(N(\beta+A)+N\vp^2 c^2\beta\big)
\end{equation}
\begin{equation}\label{E:inner}
\Big\langle x,\sum_{j=1}^N x_{2j-1}+\sum_{j=1}^N \beta x_{2j}\Big\rangle= N^{1/2}\vp c\beta+N^{1/2}\vp c A
\end{equation}
\begin{equation}
\frac{d}{d\beta}\Big\langle x,\sum_{j=1}^N x_{2j-1}+\sum_{j=1}^N \beta x_{2j}\Big\rangle= N^{1/2}\vp c
\end{equation}
Substituting the above equalities into Equation \eqref{E:critical} gives that 
$$
N^{1/2}\vp c\big(N+N(\beta+A)^2+N\vp^2 c^2\beta^2+N\vp^2A^2\big)^{1/2}=\frac{(N^{1/2}\vp c\beta+N^{1/2}\vp c A)(N(\beta+A)+N\vp^2 c^2\beta)
}{\big(N+N(\beta+A)^2+N\vp^2 c^2\beta^2+N\vp^2A^2\big)^{1/2}}
$$
Multiplying both sides by the denominator and dividing by $N^{3/2}\vp c$ gives the following.
\begin{align*}
1+(\beta+A)^2+\vp^2 c^2\beta^2+\vp^2A^2&=(\beta+A)(\beta+A+\vp^2 c^2\beta)\\
1+(\beta+A)^2+\vp^2c^2\beta^2+\vp^2A^2&=(\beta+A)^2 + \vp^2c^2\beta^2 +\vp^2c^2\beta A\\
1+\vp^2A^2&=\vp^2c^2\beta A\\
\end{align*}
Thus, the critical point is at $\beta=\frac{1+\vp^2 A^2}{\vp^2 c^2 A}$.  Hence, $\sum_{j=1}^N x_{2j-1}+\sum_{j=1}^N\frac{1+\vp^2 A^2}{\vp^2 c^2A} x_{2j}$ will be a scalar multiple of the projection $Px$.  We now use \eqref{E:norm} to obtain a lower bound for the following.

\begin{align*}
\Big\|\sum_{j=1}^N x_{2j-1}+\sum_{j=1}^N \frac{1+\vp^2 A^2}{\vp^2c^2 A} x_{2j}\Big\|&>\Big\|\sum_{j=1}^N x_{2j-1}+\sum_{j=1}^N \frac{\vp^2 A^2}{\vp^2c^2 A} x_{2j}\Big\|\\
&=\Big\|\sum_{j=1}^N x_{2j-1}+\sum_{j=1}^N c^{-2}A x_{2j}\Big\|\\
&=(N+N(c^{-2}A+A)^2+N\vp^2c^2(c^{-2}A)^2+N\vp^2A^2)^{1/2}\quad\textrm{ by }\eqref{E:norm}\\
&>N^{1/2}c^{-2}A\qquad\textrm{ by the second term in the sum.}
\end{align*}
We now use \eqref{E:inner} to obtain an upper bound for the following.
\begin{align*}
\Big\langle x, \sum_{j=1}^N x_{2j-1}+\sum_{j=1}^N \frac{1+\vp^2 A^2}{\vp^2 c^2A} x_{2j}\Big\rangle&<\Big\langle x, \sum_{j=1}^N x_{2j-1}+\sum_{j=1}^N \frac{2\vp^2 A^2}{\vp^2 c^2A} x_{2j}\Big\rangle\\
&= \Big\langle x, \sum_{j=1}^N x_{2j-1}+\sum_{j=1}^N 2c^{-2}A x_{2j}\Big\rangle\\
&=N^{1/2}\vp c(2c^{-2}A)+N^{1/2}\vp c A\quad\textrm{ by }\eqref{E:inner}\\
&<3c^{-1}N^{1/2}\vp A
\end{align*}

We obtain an upper bound on $\|Px\|$ by
\begin{align*}
\|Px\|&=\frac{\langle x, \sum_{j=1}^N x_{2j-1}+\sum_{j=1}^N \frac{1+\vp^2 A^2}{\vp^2c^2 A} x_{2j}\rangle}{\|\sum_{j=1}^N x_{2j-1}+\sum_{j=1}^N \frac{1+\vp^2 A^2}{\vp^2c^2 A} x_{2j}\|}\\
&<\frac{3c^{-1}N^{1/2}\vp A}{c^{-2}N^{1/2}A}\\
&=3 c \vp
\end{align*}

This proves (b).  We will now prove (c).  

We have that
\begin{align*}
\Big\|\Big(\sum_{j=1}^N \frac{-1}{\vp A N^{1/2}}x_{2j-1}+\frac{1}{\vp N^{1/2}} x_{2j}\Big)-y\Big\|&=\Big\|\sum_{j=1}^N   \frac{-1}{\vp A N^{1/2}}e_{2j-1}\Big\|\\
&=\vp^{-1}A^{-1}\\
&<\vp   \qquad \textrm{ as }A=\sum_{j=1}^N a_j>\vp^{-2}.
\end{align*}

This proves that the distance from $y$ to the span of $(x_j)_{j=1}^{2N}$ is at most $\vp$ and hence we have proven $(c)$.

We now prove (a).  Let $0\leq M<N$ and $(b_j)_{j=1}^{2N}\in\ell_2(\Z_{2N})$.  We will first prove that $\|\sum_{j=1}^{2M+1} b_j x_j\|\leq(1+4\vp)\|\sum_{j=1}^{2N} b_j x_j\|$. 

The series $\sum_{j=1}^{2N} b_j x_j$ is expressed in terms of the basis $(e_j)_{j=1}^{2N}\cup(f_j)_{j=1}^{2N}$ by

\begin{equation}\label{E:full} 
\sum_{j=1}^{2N} b_j x_j= \sum_{j=1}^N b_{2j-1}e_{2j-1}+\sum_{j=1}^{N}\Big(b_{2j}+\sum_{i=0}^{N-1} b_{2i+1} a_{j-i}\Big)e_{2j}
+\sum_{j=1}^{N}\vp b_{2j}c f_{2j-1}+\sum_{j=1}^N\Big(\vp\sum_{i=0}^{N-1} b_{2i+1} a_{j-i}\Big)f_{2j}.
\end{equation}

The series $\sum_{j=1}^{2M+1} b_j x_j$ is expressed in terms of the basis $(e_j)_{j=1}^{2N}\cup(f_j)_{j=1}^{2N}$ by
\begin{equation}\label{E:part}
\sum_{j=1}^{2M+1} b_j x_j= \sum_{j=1}^{M+1} b_{2j-1}e_{2j-1}+y_{1,1}+
y_{1,2}+\sum_{j=1}^{M}\vp b_{2j}c f_{2j-1}+y_{2,1}+ y_{2,2}.
\end{equation}
Where, 
$$
y_{1,1}= \sum_{j=1}^{M}\Big(b_{2j}+\sum_{i=0}^{M} b_{2i+1} a_{j-i}\Big)e_{2j}\quad\textrm{ and }\quad
y_{1,2}=\sum_{j=M+1}^N\Big(\sum_{i=0}^{M} b_{2i+1} a_{j-i}\Big)e_{2j}
$$
$$
y_{2,1}= \sum_{j=1}^{M}\Big(\vp\sum_{i=0}^{M} b_{2i+1} a_{j-i}\Big)f_{2j}\quad\textrm{ and }\quad
y_{2,2}=\sum_{j=M+1}^N\Big(\vp\sum_{i=0}^{M} b_{2i+1} a_{j-i}\Big)f_{2j}
$$

Note that
\begin{equation}\label{E:compare2}
\Big\|\sum_{j=1}^{2N} b_j x_j\Big\|^2\geq \Big\|\sum_{j=1}^N b_{2j-1} e_{2j-1}\Big\|^2= \sum_{j=1}^{N}b_{2j-1}^2
\end{equation}

We first show that $\|y_{1,2}\|<\vp\|\sum_{j=1}^{2N} b_j x_j\|$.

\begin{align*}
\|y_{1,2}\|^2&=\Big\|\sum_{j=M+1}^N\Big(\sum_{i=0}^{M} b_{2i+1} a_{j-i}\Big)e_{2j}\Big\|^2\\
&=\sum_{j=M+1}^N \Big|\sum_{i=0}^{M} b_{2i+1} a_{j-i}\Big|^2\\
&\leq \sum_{j=M+1}^N \Big(\sum_{i=0}^{M} b_{2i+1}^2\Big)\Big(\sum_{i=0}^{M} a_{j-i}^2\Big)\quad\textrm{ by Cauchy-Schwartz}\\ 
&\leq \Big\|\sum_{j=1}^{2N} b_j x_j\Big\|^2\sum_{j=M+1}^N \sum_{i=0}^{M} a_{j-i}^2\quad\textrm{ by \eqref{E:compare2}}\\
&\leq\Big\|\sum_{j=1}^{2N} b_j x_j\Big\|^2\sum_{j=1}^N j a_j^2<\vp^2\Big\|\sum_{j=1}^{2N} b_j x_j\Big\|^2
\end{align*}
Thus we have that,
\begin{equation}\label{E:y12}
\|y_{1,2}\|<\vp\Big\|\sum_{j=1}^{2N} b_j x_j\Big\|.
\end{equation}
The same argument as above gives the following inequality.
\begin{equation}\label{E:y12same}
 \Big\| \sum_{j=1}^{M}\Big(\sum_{i=M+1}^{N-1} b_{2i+1} a_{j-i}\Big)e_{2j}\Big\| <\vp\Big\|\sum_{j=1}^{2N} b_j x_j\Big\|
\end{equation}
We can now estimate $\|y_{1,1}\|$.
\begin{align*}
\|y_{1,1}\|&=\Big\| \sum_{j=1}^{M}\Big(b_{2j}+\sum_{i=0}^{M} b_{2i+1} a_{j-i}\Big)e_{2j}\Big\|\\
&<\Big\|\sum_{j=1}^{M}\Big(b_{2j}+\sum_{i=0}^{M} b_{2i+1} a_{j-i}\Big)e_{2j}\Big\|-\Big\|\sum_{j=1}^{M}\Big(\sum_{i=M+1}^{N-1} b_{2i+1} a_{j-i}\Big)e_{2j}\Big\| +\vp\Big\|\sum_{j=1}^{2N} b_j x_j\Big\|\quad\textrm{ by }\eqref{E:y12same}\\
&\leq\Big\|\sum_{j=1}^{M}\Big(b_{2j}+\sum_{i=0}^{N-1} b_{2i+1} a_{j-i}\Big)e_{2j}\Big\|+\vp\Big\|\sum_{j=1}^{2N} b_j x_j\Big\|\\
&=\Big\|\sum_{j=1}^{M}\Big(b_{2j}+\sum_{i=1}^{N} b_{2j-2i-1} a_{i}\Big)e_{2j}\Big\|+\vp\Big\|\sum_{j=1}^{2N} b_j x_j\Big\|\\
\end{align*}
Thus, we have that
\begin{equation}\label{E:y11}
\|y_{1,1}\|<\Big\|\sum_{j=1}^{M}\Big(b_{2j}+\sum_{i=1}^{N} b_{2j-2i-1} a_{i}\Big)e_{2j}\Big\|+\vp\Big\|\sum_{j=1}^{2N} b_j x_j\Big\|
\end{equation}

The same technique for estimating $y_{1,1}$ and $y_{1,2}$ gives that 
\begin{equation}\label{E:y2}
\|y_{2,1}\|<\Big\|\sum_{j=1}^{M}\Big(\vp\sum_{i=1}^{N} b_{2j-2i-1} a_{i}\Big)f_{2j}\Big\|+\vp\Big\|\sum_{j=1}^{2N} b_j x_j\Big\|\quad
\textrm{ and }\quad \|y_{2,2}\|<\vp\Big\|\sum_{j=1}^{2N} b_j x_j\Big\|.
\end{equation}

We consider \eqref{E:part} with the inequalities \eqref{E:y12}, \eqref{E:y11}, and \eqref{E:y2} to get
\begin{align*}
\Big\|\sum_{j=1}^{2M+1} b_j x_j\Big\|&<  \Big\|\sum_{j=1}^{M+1} b_{2j-1}e_{2j-1}+\sum_{j=1}^{M}\Big(b_{2j}+\sum_{i=1}^N b_{2j-2i-1} a_i\Big)e_{2j}\\
&\qquad\qquad+\sum_{j=1}^{M}\vp b_{2j}c f_{2j-1}+ \sum_{j=1}^M\Big(\vp\sum_{i=1}^N b_{2j-2i-1} a_i\Big)f_{2j}\Big\|+4\vp \Big\|\sum_{j=1}^{2N} b_j x_j\Big\|\\
&\leq \Big\|\sum_{j=1}^{N} b_{2j-1}e_{2j-1}+\sum_{j=1}^{N}\Big(b_{2j}+\sum_{i=1}^N b_{2j-2i-1} a_i\Big)e_{2j}\\
& \qquad\qquad+\sum_{j=1}^{N}\vp b_{2j}c f_{2j-1}+ \sum_{j=1}^N\Big(\vp\sum_{i=1}^N b_{2j-2i-1} a_i\Big)f_{2j}\Big\|+4\vp \Big\|\sum_{j=1}^{2N} b_j x_j\Big\|\\
&= \Big\|\sum_{j=1}^{2N} b_j x_j\Big\|+4\vp \Big\|\sum_{j=1}^{2N} b_j x_j\Big\|\
\end{align*}

This proves for all $0\leq M< N$ that $\|\sum_{j=1}^{2M+1} b_j x_j\|\leq(1+4\vp)\|\sum_{j=1}^{2N} b_j x_j\|$.  The same argument proves that also $\|\sum_{j=1}^{2M} b_j x_j\|\leq(1+4\vp)\|\sum_{j=1}^{2N} b_j x_j\|$.  Thus, the sequence $(x_j)_{j=1}^{2N}$ has basic constant $(1+4\vp)$ and we have proven $(a)$.

\end{proof}

Before presenting our main theorem, we discuss the central idea behind our construction and its relation to the construction of Johnson and Schechtman \cite{JS}.   The conditional Schauder basis for $L_1(\R)$ constructed by Johnson and Schechtman can be formed inductively where at each step they break up a Haar vector $f$ into a positive part $f^+$ and a negative part $f^-$ then append a vector $2\cdot 1_{(n,n+1)}$ to both parts where $(n,n+1)$ is disjoint from the support of all vectors created so far in the induction process.  The vectors $f^+ +2\cdot1_{(n,n+1)}$ and $f^- +2\cdot1_{(n,n+1)}$ are then both positive vectors. One can then recover the vector $f$ by $f=(f^+ +2\cdot1_{(n,n+1)})-(f^- +2\cdot1_{(n,n+1)})$. Furthermore, the zero vector is the closest vector to $f^++f^-$ in the span of $f^+ +2\cdot 1_{(n,n+1)}$ and $f^- +2\cdot1_{(n,n+1)}$.
This idea can be used to build a Schauder basis for $L_1(\R)$, but it fails for $L_p(\R)$ for all $1<p<\infty$.  

Our procedure for constructing a positive Schauder basis for $L_2(\R)$ is also constructed inductively.  However, at each step instead of breaking up a vector into 2 pieces, we break it up into many pieces.  That is, given $\vp>0$ and $f\in L_2(\R)$ we choose a suitably large $N\in\N$, and then we break up the positive part of $f$ into $N$ pieces $(f^+_n)_{n=1}^N$ with the same distribution and the negative part of $f$ into $N$ pieces $(f^-_n)_{n=1}^N$ with the same distribution.  Here we mean that two functions $g,h:\R\rightarrow\R$ have {\em the same distribution} if for all $J\subseteq \R$ we have that $\lambda(g^{-1}(J))=\lambda(h^{-1}(J))$ where $\lambda$ is Lebesgue measure.
Given $(f^+_n)_{n=1}^N$ and $(f^-_n)_{n=1}^N$, we use Lemma \ref{L:conditional} to create a  positive highly conditional basic sequence  $(x_n)_{n=1}^{2N}$ with disjoint support from $f$ and append $(x_{2n-1})_{n=1}^N$ onto $(f^-_n)_{n=1}^N$ and append $(x_{2n})_{n=1}^N$ onto $(f^+_n)_{n=1}^N$.  The vectors $f^+_n+ x_{2n}$ and $f^-_n+x_{2n-1}$ are then both positive vectors for all $n\in\N$.
The conditionality of $(x_{n})_{n=1}^{2N}$ allows for $f$ to be within $\vp$ of $(\sum_{n=1}^N f^+_n+ x_{2n})-(\sum_{n=1}^{N}f^-_n+ x_{2n-1})$ and for the orthogonal projection of $f^+ +f^-$ onto  $span_{1\leq n\leq N}\{f^+_n+x_{2n},f^-_n+ x_{2n-1}\}$ to have norm smaller than $\vp$.

\begin{thm}\label{T:basisL2}
For all $\vp>0$, there exists a positive Schauder basis for $L_2(\R)$ with basis constant at most $1+\vp$.
\end{thm}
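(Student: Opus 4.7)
The plan is to construct the basis inductively as a concatenation of finite blocks, one per element of a fixed countable dense sequence $(g_k)_{k=1}^\infty \subseteq L_2(\R)$, using Lemma \ref{L:conditional} to produce each block. Fix a null sequence $(\vp_k)$ of positive reals with $\prod_k(1+4\vp_k) \le 1+\vp$. At stage $k$, let $V_k$ be the closed linear span of the vectors produced in stages $1,\dots,k-1$, let $P_k$ be the orthogonal projection onto $V_k$, and set $h_k = g_k - P_k g_k$. Decompose $h_k = h_k^+ - h_k^-$ and, via the standard level-set slicing for non-negative $L_2$ functions, write $h_k^+ = \sum_{n=1}^{N_k} h_{k,n}^+$ and $h_k^- = \sum_{n=1}^{N_k} h_{k,n}^-$ as sums of pairwise disjointly supported equidistributed pieces, with uniform norms $\|h_{k,n}^\pm\|_2 = \|h_k^\pm\|_2 N_k^{-1/2}$.

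Choose a bounded region $R_k \subseteq \R$ disjoint from $\supp(h_k)$ and from every region $R_j$ used before, and realize $\ell_2(\Z_{2N_k}) \oplus \ell_2(\Z_{2N_k})$ isometrically inside $L_2(R_k)$ by taking the unit vectors $(e_j), (f_j)$ to be $L_2$-normalized indicators of $4N_k$ disjoint equal-length subintervals of $R_k$. Apply Lemma \ref{L:conditional} with parameter $\vp_k$ and $c_k$ calibrated by $\|h_{k,1}^-\|_2 / \|h_{k,1}^+\|_2$ (or its reciprocal, arranged so $c_k \le 1$), then rescale the resulting non-negative sequence $(x_{k,j})_{j=1}^{2N_k}$ by a positive factor $\mu_k$ to be fixed momentarily. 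Append to the basis, in order, the new non-negative vectors
\[
y_{k,2n-1} = h_{k,n}^- + \mu_k x_{k,2n-1}, \qquad y_{k,2n} = h_{k,n}^+ + \mu_k x_{k,2n}, \quad n=1,\dots,N_k.
\]

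The verification breaks into three pieces, of which the third is the main obstacle. \emph{Density:} the alternating combination $\sum_n(-y_{k,2n-1}+y_{k,2n})$ equals $h_k + \mu_k \sum_n(-x_{k,2n-1}+x_{k,2n})$; choosing $\mu_k$ sufficiently small forces the error term below $\vp_k$ in $L_2$-norm, so $g_k$ lies within $V_{k+1}$ up to a summable error, and the total span is dense. \emph{Within-block basicity:} any partial sum within block $k$ decomposes as an $L_2$-orthogonal sum of an $h_k$-part supported in $\supp(h_k)$ and an $R_k$-part supported in $R_k$; the $h_k$-part is a sum of disjointly supported non-negative functions whose partial norms are automatically controlled by the full-block norm, while Lemma \ref{L:conditional}(1) gives $(1+4\vp_k)$-basicity on the $R_k$-part, so the block is $(1+4\vp_k)$-basic. \emph{Across-block basicity:} this is the hard step, because the $h_k^\pm$-portions of the block-$k$ vectors live in $\supp(h_k)$, which can overlap with the supports of earlier basis vectors, so a partial sum ending inside block $k$ couples the new block to $V_k$. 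The mechanism for decoupling is Lemma \ref{L:conditional}(2): the orthogonal projection of the magnitude-pattern vector onto the span of $(x_{k,j})$ has norm $\le \vp_k$, which is exactly the ingredient needed to absorb the cross-block interaction into a multiplicative $(1+O(\vp_k))$ perturbation of the basis constant. Telescoping across all blocks yields the overall basis constant $\prod_k(1+4\vp_k) \le 1+\vp$ as required, completing the construction.
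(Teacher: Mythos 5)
Your overall architecture (inductive blocks driven by Lemma \ref{L:conditional}, with $h_k$ taken orthogonal to the previously built span and sliced into equidistributed pieces) matches the paper, but the way you attach the conditional vectors breaks the one step you yourself identify as the main obstacle, the across-block estimate. In your construction the whole copy of $\ell_2(\Z_{2N_k})\oplus\ell_2(\Z_{2N_k})$ lives in a fresh region $R_k$, and the new basis vectors are $h_{k,n}^{\pm}+\mu_k x_{k,j}$ with $\mu_k$ forced to be tiny by your density step (the error term $\mu_k\sum_n(x_{k,2n}-x_{k,2n-1})$ has norm of order $\mu_k N_k^{1/2}\sum_j a_j$, which is huge unless $\mu_k$ is tiny). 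But then old vectors $x\in V_k$ vanish on $R_k$, so their interaction with the new block is governed entirely by the $h$-parts, and Lemma \ref{L:conditional}(2) says nothing about it: that lemma controls the projection of a pattern vector onto $\mathrm{span}(x_{k,j})$ \emph{inside} $\ell_2(\Z_{2N_k})\oplus\ell_2(\Z_{2N_k})$, a space the old vectors never meet. Concretely, with $\mu_k$ small the span of the new block essentially contains $h_{k,1}^+,\dots,h_{k,N_k}^+,h_{k,1}^-,\dots$ up to $O(\mu_k)$ perturbations; orthogonality of $V_k$ to $h_k$ does not prevent $V_k$ from containing unit vectors with large inner product against $|h_k|=h_k^++h_k^-$, or even vectors close to a difference of two slices such as $h_{k,1}^+-h_{k,2}^+$ (your hypotheses exclude neither), and any such vector is nearly in the span of the new block, so no uniform bound on the basis constant survives. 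A related gap: you only require the pieces $h_{k,n}^{\pm}$ to be equidistributed among themselves, whereas the decoupling argument needs every previously constructed function to have the same distribution on each slice, so that an old vector cannot distinguish the slices; the paper secures this by keeping all basis vectors piecewise constant and slicing along a common refinement, a hypothesis your $h_k=g_k-P_kg_k$ (for an arbitrary dense sequence $g_k$) does not provide.

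The paper's construction differs at exactly this point: it does not add a small multiple of a fresh-support conditional system to the slices. Instead, the second coordinate of $\ell_2(\Z_{2N})\oplus\ell_2(\Z_{2N})$ is realized \emph{as} the normalized slices of $y^{\pm}$ and only the first coordinate goes onto fresh intervals, so the new basis vectors are $\Psi(x_j)$ with the $\vp$-small weights of Lemma \ref{L:conditional} sitting on the slices and the $O(1)$ and $a_j$ weights on new territory. With the strong slicing condition, any old vector projects into the image of $\Psi$ only along the same-sign pattern, which Lemma \ref{L:conditional}(2) makes nearly orthogonal to the new block, while Lemma \ref{L:conditional}(3) still recovers $y$ (the recovery error lives only on the fresh intervals and is small because $\sum a_j$ is large). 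To repair your argument you would have to reproduce both features — the stronger equidistribution relative to all earlier vectors, and the asymmetric placement of the two coordinates — at which point you have the paper's proof; as written, the across-block step fails.
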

\begin{proof}
Let $0<\vp<1/2$ and  $\vp_j\searrow 0$ such that $\sum \vp_j<\vp$ and $\prod (1+\vp_j)<1+\vp$.  Let $(h_j)_{j=1}^\infty$ be a Schauder basis for $L_2(\R)$ which is an enumeration of the union of the Haar bases for $L_2([n,n+1])$ for all $n\in\Z$.  We assume that $h_1=1_{[0,1]}$.
We will inductively construct a sequence of nonnegative vectors $(z_j)_{j=1}^\infty$ and an increasing sequence of integers $(N_j)_{j=1}^\infty$ such that for all $n\in\N$,
\begin{enumerate}
\item[(a)] $z_n$ is piecewise constant.

\item[(b)] $(z_j)_{j=1}^{N_n}$ is $\prod_{j\leq n}(1+\vp_j)$ basic. 
\item[(c)] $dist(h_n,span_{j\leq N_n}(z_j))<\vp_n$.
\end{enumerate}

We first claim that $(z_j)_{j=1}^\infty$ will be a Schauder basis for $L_2(\R)$ with basis constant at most $1+\vp$.  Indeed, by (b) the sequence $(z_j)_{j=1}^\infty$ is $\prod(1+\vp_j)<(1+\vp)$ basic.  By (c) the span of $(z_j)_{j=1}^\infty$ contains a perturbation of an orthonormal basis and hence has dense span.  Thus all that remains is to construct $(z_j)$ by induction.

For the  base case we take $z_1=h_1=1_{[0,1]}$ and $N_1=1$.  Thus all three conditions are trivially satisfied.  Now let $k\in\N$ and assume that $(z_j)_{j=1}^{N_k}$ are given to satisfy the induction hypothesis.  Without loss of generality we may assume that $h_{k+1}$ is not contained in the span of $(z_j)_{j=1}^{N_k}$.  This is because if $h_{k+1}\in span_{j\leq N_k}(z_j)$ we could just take $N_{k+1}=N_k+1$ and $z_{N_{k+1}}$ to be the indicator function of an interval with support disjoint from the support of $z_j$ for all $1\leq j\leq N_k$. This would trivially satisfy (a), (b), and (c).  Thus, we may assume that $P_{(span_{j\leq N_k}(z_j))^{\perp}}h_{k+1}\neq 0$.  If $y\in L_2(\R)$ we write $y=y^+-y^-$ where $y^+$ and $y^-$ are non-negative and disjoint.
Let $y$ be a multiple of $P_{(span_{j\leq k}(z_j))^{\perp}}h_{k+1}$ such that $\|y^-\|=1$ and $c:=\|y^+\|\leq 1$.   Note that $y$ is piecewise constant as $h_{k+1}$ and $(z_j)_{j=1}^{N_k}$ are all piecewise constant.  If $c=0$ set $z_{k+1}=y^-$ and $N_{k+1}=N_{k}+1$, else we proceed as follows:

Let $\vp'>0$. By Lemma \ref{L:conditional} there exists $N\in\N$ and $(x_j)_{j=1}^{2N}$ in $\ell_2(\Z_{2N}\oplus\Z_{2N})_+$ such that 
\begin{enumerate}
\item $(x_j)_{j=1}^{2N}$ is $(1+\vp')$-basic.
\item The orthogonal projection of $(0,...,0)\oplus(\frac{1}{\sqrt{N}},\frac{c}{\sqrt{N}},...,\frac{1}{\sqrt{N}},\frac{c}{\sqrt{N}})$ onto the span of $(x_j)_{j=1}^{2N}$ has norm at most $\vp'$.
\item The distance from $(0,...,0)\oplus(\frac{c}{\sqrt{N}},\frac{-1}{\sqrt{N}},...,\frac{c}{\sqrt{N}},\frac{-1}{\sqrt{N}})$ to the span of $(x_j)_{j=1}^{2N}$ is at most $\vp'$.
\end{enumerate}

Let $X_k$ be the span of $y$ and $(z_j)_{j=1}^{N_k}$. Note that $X_k$ is a space of simple functions with finitely many discontinuities.  We claim that there exists a sequence of finite unions of intervals $(G_{j})_{j=1}^{2N}$ in $\R$ such that 
\begin{enumerate}
\item[(i)] The sequence $(G_{j})_{j=1}^{2N}$ is pairwise disjoint.
\item[(ii)] $\cup_{j=1}^N G_{2j-1}$ is the support of $y^+$ and $\cup_{j=1}^N G_{2j}$ is the support of $y^-$.  
\item[(iii)] For all $x\in X_k$, the sequence of functions $(x|_{G_{2j-1}})_{j=1}^N$ all have the same distribution.  
\item[(iv)] For all $x\in X_k$, the sequence of functions $(x|_{G_{2j}})_{j=1}^N$ all have the same distribution.
\end{enumerate}

 To prove this, we let $(E_j)_{j=1}^{M_1}$ be a partition of the support of $y^+$ into intervals such that for all $1\leq j\leq M_1$ both $y$ and $z_i$ are constant on $E_j$ for all $1\leq i\leq N_k$.
We know by (a) that such a partition exists.
 Likewise, let  $(F_j)_{j=1}^{M_0}$ be a partition of the support of $y^-$ into intervals such that for all $1\leq j\leq M_0$ both $y$ and $z_i$ are constant on $F_j$ for all $1\leq i\leq N_k$.  For all $1\leq j\leq M_1$ let $(E_{i,j})_{i=1}^{N}$ be a partition of $E_j$ into intervals of equal length, and for all $1\leq j\leq M_0$ let $(F_{i,j})_{i=1}^{N}$ be a partition of $F_j$ into intervals of equal length.  For all $1\leq i\leq N$ we let $G_{2i-1}=\cup_{j=1}^{M_1}E_{i,j}$ and let  $G_{2i}=\cup_{j=1}^{M_0}F_{i,j}$.  By construction, $(G_{i})_{i=1}^{2N}$ satisfies (i),(ii),(iii), and (iv).
 
 Let $(H_j)_{j=1}^{2N}$ be a sequence of unit length intervals in $\R$ with pairwise disjoint support which is disjoint from the support of $y$ and the support of $z_j$ for all $1\leq j\leq N_k$.
 We now define a map $\Psi:\ell_2(\Z_{2N}\oplus\Z_{2N})\rightarrow L_2(\R)$ by 
 $$\Psi(a_1,...,a_{2N},b_1,...,b_{2N})=\sum_{j=1}^{N} c^{-1}N^{1/2}b_{2j-1} 1_{G_{2j-1}}y^+ +\sum_{j=1}^{N} N^{1/2}b_{2j} 1_{ G_{2j}}y^-+\sum_{j=1}^{2N} a_j 1_{H_{j}}
 $$
By (i),(ii),(iii), and that $\|y^+\|=c$ we have that $\|1_{G_{2j-1}}y^+\|=cN^{-1/2}$ for all $1\leq j\leq N$.  Likewise, as $\|y^-\|=1$ we have that $\|1_{G_{2j}}y^-\|=N^{-1/2}$ for all $1\leq j\leq N$.  Thus, $\Psi$ is an isometric embedding and maps positive vectors in $\ell_2(\Z_{2N})\oplus\ell_2(\Z_{2N})$ to positive vectors in $L_2(\R)$.  We let $N_{k+1}=N_k+2N$ and let $z_{N_k+j}=\Psi(x_j)$ for all $1\leq j\leq 2N$.  As $y$ is piecewise constant, $H_i$ is an interval, and $G_i$ is a finite union of intervals for all $1\leq i\leq 2N$, we have that $z_j$ is piecewise constant for all $N_k< j\leq N_{k+1}$.  Thus we have satisfied (a).

Note that $\Psi((0,...,0)\oplus(\frac{c}{\sqrt{N}},\frac{-1}{\sqrt{N}},...,\frac{c}{\sqrt{N}},\frac{-1}{\sqrt{N}}))=y$, thus by (3) the distance from $y$ to the span of $(z_j)_{j=N_k+1}^{N_{k+1}}$ is at most $\vp'$ which proves (c) if $\vp'$ is small enough.

Let $x\in span_{j\leq N_k} z_j$.  Let $(e_j)_{j=1}^{2N}$ denote the unit vector basis for the second coordinate of $\ell_2(\Z_{2N})\oplus\ell_2(\Z_{2N})$.
Then by (iii), we have that $\langle \Psi(e_{2j-1}),x\rangle=\langle \Psi(e_{2i-1}),x\rangle$ for  all $1\leq i,j\leq N$, and by (iv) we have that $\langle \Psi(e_{2j}),x\rangle=\langle \Psi(e_{2i}),x\rangle$ for all $1\leq i,j\leq N$.   We have that $x$ is orthogonal to $y$ and $y=\frac{c}{\sqrt{N}} \Psi(e_1)-\frac{1}{\sqrt{N}}\Psi(e_2)+...+\frac{c}{\sqrt{N}}\Psi(e_{2N-1})-\frac{1}{\sqrt{N}}\Psi(e_{2N})$. Thus the orthogonal projection of $x$ onto $\Psi(\ell_2(\Z_{2N})\oplus\ell_2(\Z_{2N}))$ is a multiple of $\Psi(e_1)+c\Psi(e_2)+...+\Psi(e_{2N-1})+c\Psi(e_{2N})$.  Hence by (2) the orthogonal projection of $x$ onto $span_{N_k< j\leq N_{k+1}}z_j=span_{1\leq j\leq 2N}\Psi(x_j)$ has norm at most $2\vp'\|x\|$.   The sequence $(z_j)_{j=1}^{N_k}$ is $\prod_{j\leq k}(1+\vp_j)$ basic and $(z_j)_{j=N_k+1}^{N_{k+1}}$ is $(1+\vp')$ basic.  The inner product between a unit vector in $span_{j\leq N_k} z_j$ and a unit vector in $span_{N_k< j\leq N_{k+1}}z_j$ is at most $2\vp'$.  Thus, if $\vp'$ is small enough then $(z_j)_{j=1}^{N_{k+1}}$ is $\prod_{j\leq k+1}(1+\vp_j)$ basic which proves (b).  This completes the construction of $(z_j)$ by induction.

\end{proof}
\begin{rem}
Similar to \cite{JS}, one can use classification theorems to extend the above result to all separable $L_2(\mu)$. See, for example, \cite{LW} or Section 2.7 of \cite{MN}.  That is, if $L_2(\mu)$ is separable then for all $\vp>0$ there exists a positive Schauder basis for $L_2(\mu)$ with basis constant at most $1+\vp$.
\end{rem}

\section{A basic sequence in $L_p(\R)$ for $1<p<\infty$.}

Our method in Section \ref{S:basis_2} repeatedly makes use of orthogonal projections onto subspaces of $L_2(\R)$.  This prevents us from extending the construction to $L_p(\R)$ for $p\neq 2$.  However, we are able to obtain the result for large subspaces of $L_p(\R)$.  Indeed, for $\vp>0$ and $1<p<\infty$, we will construct a positive $(2+\vp)$-basic sequence $(z_j)_{j=1}^\infty$ in $L_p(\R)$ such that $L_p(\R)$ is isomorphic to a subspace of the closed span of $(z_j)_{j=1}^\infty$.

\begin{lem}\label{L:sum_p}
For all $\vp>0$ and $1<p<\infty$ there exists $N\in\N$ and $a_n\searrow0$ such that $\sum_{n=1}^N a_n>\vp^{-2}$ and $\sum_{n=1}^N(\sum_{j=n}^N a_j^q)^{p/q}<\vp^p$ where $1/p+1/q=1$.
\end{lem}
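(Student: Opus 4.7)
The plan is to take $a_n = c/n$ for $n = 1, \ldots, N$, with the constant $c > 0$ and length $N \in \N$ both chosen as functions of $\vp$ and $p$. This sequence is strictly decreasing, and the first requirement immediately follows from $\sum_{n=1}^N a_n = c\,H_N \geq c\log N$.

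For the second sum I would bound the inner tail by an integral: for $n \geq 2$,
$\sum_{j=n}^N j^{-q} \leq \int_{n-1}^\infty x^{-q}\,dx = (n-1)^{1-q}/(q-1)$.
The key identity $(1-q)p/q = -1$, which follows directly from $1/p + 1/q = 1$, then collapses $(\sum_{j\geq n} a_j^q)^{p/q}$ to a constant multiple of $c^p/(n-1)$. Summing over $n = 2, \ldots, N$ contributes of order $c^p \log N$, and the $n=1$ term is bounded by $c^p\,\zeta(q)^{p/q}$ using $\zeta(q) < \infty$. Together one gets an estimate of the shape $\sum_{n=1}^N\bigl(\sum_{j=n}^N a_j^q\bigr)^{p/q} \leq K(p)\,c^p \log N$ for an explicit constant $K(p)$ depending only on $p$.

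The two target inequalities then read $c \log N > \vp^{-2}$ and $c^p \log N < \vp^p/K(p)$, which force $c$ into the window $(\vp^{-2}/\log N,\; K(p)^{-1/p}\vp/(\log N)^{1/p})$. This window is non-empty exactly when $(\log N)^{1/q} > K(p)^{1/p}\vp^{-3}$, so I would choose $N$ to be any integer greater than $\exp\!\bigl(K(p)^{q-1}\vp^{-3q}\bigr)$ and then take any $c$ in the window.

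The main obstacle — really the only one — is noticing that scaling $c$ alone cannot decouple the two sums: they scale as $c$ and $c^p$ respectively, but both are of order $\log N$, so the ratio $\sum a_n / (\sum (\cdots))^{1/p}$ is independent of $c$. The leverage needed to thread the needle comes from $(\sum a_n)^p / \sum (\cdots) \sim (\log N)^{p-1}/K(p)$, which diverges as $N \to \infty$ since $p > 1$. Once this is observed, the choice of $N$ and then $c$ is routine. An equally clean alternative would be to use $a_n = 1/(n \log^s n)$ for any $s \in (1/p, 1)$, where $\sum a_n$ diverges while $\sum(\sum_{j\geq n} a_j^q)^{p/q} < \infty$ outright, after which a single scaling constant suffices; I would keep the harmonic version above for its transparency.
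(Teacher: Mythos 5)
Your proof is correct, but it runs on a genuinely different mechanism than the paper's. The paper fixes once and for all the sequence $f(x)=((x+1)\ln(x+1))^{-1}$, for which $\sum_n f(n)=\infty$ while $\sum_n\bigl(\sum_{j\ge n}f(j)^q\bigr)^{p/q}<\infty$ (both checked by comparison with the integrals $\int_1^\infty f$ and $\int_1^\infty(\int_x^\infty f^q)^{p/q}$), then multiplies by a constant of order $\vp$ to make the convergent double sum smaller than $\vp^p$, and finally chooses $N$ so that the divergent partial sums exceed $\vp^{-2}$; the two requirements are decoupled, one handled by scaling and the other by the cutoff. You instead take the harmonic choice $a_n=c/n$, for which both quantities are of order $\log N$ but with different homogeneities $c$ and $c^p$, and you thread the needle using $(\log N)^{p-1}\to\infty$: your integral bound $\sum_{j\ge n}j^{-q}\le (n-1)^{1-q}/(q-1)$, the identity $(1-q)p/q=-1$, the separate $n=1$ term via $\zeta(q)$, the nonemptiness criterion $(\log N)^{1/q}>K(p)^{1/p}\vp^{-3}$, and the quantifier order (first $N$, then $c$ in the window) are all correct. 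What the paper's route buys is a cleaner logical structure -- a single universal sequence whose two relevant series converge/diverge outright, so no window has to be threaded -- and indeed your own ``alternative'' $a_n=1/(n\log^s n)$ with $s\in(1/p,1]$ is essentially the paper's argument (it takes $s=1$). What your version buys is elementary explicitness: an admissible $N$ of size $\exp(K(p)^{q-1}\vp^{-3q})$ and an explicit interval of admissible $c$, at the price of the observation you rightly flag, that scaling alone cannot work and the leverage must come from $(\sum a_n)^p/\sum(\cdots)\sim(\log N)^{p-1}$.
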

\begin{proof}
We consider the function $f:[1,\infty)\rightarrow\mathbb{R}$ given by $f(x)=((x+1)\ln(x+1))^{-1}$.  Then,
$$\int_{1}^\infty f(x)\, dx=\int_{1}^\infty ((x+1)\ln(x+1))^{-1}dx=\infty.
$$
We also have the following upper bound,
\begin{align*}
\int_{1}^\infty \left(\int_x^\infty f(t)^q\,dt\right)^{p/q}\!dx&=\int_{1}^\infty \left(\int_x^\infty ((t+1)\ln(t+1))^{-q}\,dt\right)^{p/q}\!dx\\
&\leq \int_{1}^\infty \left(\int_x^\infty (t+1)^{-q}\,dt\right)^{p/q} \ln(x+1)^{-p} \,dx\quad\textrm{ as }\ln(t+1)^{-q}\leq \ln(x+1)^{-q}\\
&= (q-1)^{-p/q}\int_{1}^\infty (x+1)^{(1-q)p/q} \ln(x+1)^{-p} \,dx\\
&= (q-1)^{-p/q}\int_{1}^\infty (x+1)^{-1} \ln(x+1)^{-p} \,dx\quad\textrm{ as } p^{-1}\!+q^{-1}=1\\
&=(q-1)^{-p/q}(p-1)^{-1} \ln(2)^{1-p}.
\end{align*}
As $f$ is a decreasing function, we have that $\sum_{n=1}^\infty f(n)=\infty$ and $\sum_{n=1}^\infty(\sum_{j=n}^\infty f(j)^q)^{p/q}<\infty$.  Hence, for all $\vp>0$ we may choose $N\in\N$ and $a_n\searrow0$ such that  $\sum_{n=1}^N a_n>\vp^{-2}$ and $\sum_{n=1}^N(\sum_{j=n}^N a_j^q)^{p/q}<\vp^p$.  In particular, for all $\vp>0$ we may choose 
$$a_n=\Big((n+2)\ln(n+2)\Big)^{-1}\Big((q-1)^{-p/q}(p-1)^{-1} \ln(2)^{1-p}\Big)^{-1/p}\vp,$$
and then choose $N\in\N$ such that $\sum_{n=1}^N a_n>\vp^{-2}$.

\end{proof}

The following lemma is an extension of Lemma \ref{L:conditional} to $\ell_p(\Z_{2N})\oplus \ell_p(\Z_{2N})$ where $1<p<\infty$.  In the previous section we constructed a positive Schauder basis for all of $L_2(\R)$ and this required a variable $0<c\leq 1$ in Lemma \ref{L:conditional}.  For $p\neq 2$, we will only be constructing a positive Schauder basis for a subspace of $L_p(\R)$, and for this reason we will no longer need the variable $c$.

\begin{lem}\label{L:conditional_p}
Let $\vp>0$ and $1<p,q<\infty$ with $1/p+1/q=1$.  There exists $N\in\N$ and a sequence $(x_j)_{j=1}^{2N}$ in $\ell_p(\Z_{2N})\oplus \ell_p(\Z_{2N})$ such that 
\begin{enumerate}
\item $(x_j)_{j=1}^{2N}$ is $(1+\vp)$-basic.
\item If $f^*=(0)_{j=1}^{2N}\oplus(N^{-1/q})_{j=1}^{2N} \in \ell_q(\Z_{2N})\oplus \ell_q(\Z_{2N})$ then $|f^*(x)|\leq \vp\|x\|$  for all $x$ in the span of $(x_j)_{j=1}^{2N}$.
\item The distance from $(0)_{j=1}^{2N}\oplus((-1)^j N^{-1/p})$ to the span of $(x_j)_{j=1}^{2N}$ is at most $\vp$.
\end{enumerate}

\end{lem}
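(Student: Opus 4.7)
The plan is to closely mirror the proof of Lemma \ref{L:conditional}, with three changes: the variable $c$ is set to $1$ throughout, the sequence $(a_n)$ is chosen via Lemma \ref{L:sum_p} rather than its $L_2$ analogue, and Hölder's inequality with conjugate exponents $p,q$ replaces Cauchy--Schwarz in the basic-sequence estimate. Explicitly, apply Lemma \ref{L:sum_p} with a small auxiliary constant $\eta>0$ (to be chosen proportional to $\vp$) to obtain $N\in\N$ and $(a_n)_{n=1}^N \subset (0,\infty)$ with $A:=\sum_{n=1}^N a_n>\eta^{-2}$ and $\sum_{n=1}^N \bigl(\sum_{k=n}^N a_k^q\bigr)^{p/q} < \eta^p$. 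On $\ell_p(\Z_{2N})\oplus \ell_p(\Z_{2N})$, let $(e_j)$ and $(f_j)$ denote the canonical bases of the two summands, let $T$ be the right cyclic shift, and define
$$x_1 = e_1 + \sum_{j=1}^N a_j e_{2j} + \eta\sum_{j=1}^N a_j f_{2j}, \qquad x_2 = e_2 + \eta f_1,$$
together with $x_{2k-1}=T_{2(k-1)}x_1$ and $x_{2k}=T_{2(k-1)}x_2$ for $k=1,\ldots,N$.

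Properties (2) and (3) then fall out of short computations. For (3), direct expansion gives
$$\frac{N^{-1/p}}{\eta A}\sum_{k=1}^N x_{2k-1} - \frac{N^{-1/p}}{\eta}\sum_{k=1}^N x_{2k} \;=\; \frac{N^{-1/p}}{\eta A}\sum_{k=1}^N e_{2k-1} \;+\; \Bigl((0)_{j=1}^{2N} \oplus \bigl((-1)^j N^{-1/p}\bigr)_{j=1}^{2N}\Bigr),$$
so the distance from the target to the span is at most $(\eta A)^{-1} < \eta$. For (2), expanding $x=\sum_j b_j x_j$ shows $f^*(x) = N^{-1/q}\eta\bigl(\sum_k b_{2k} + A\sum_k b_{2k-1}\bigr)$, while the even-$e$ coordinate of $x$ at position $2k$ is $\sigma_k := b_{2k} + \sum_i b_{2i-1} a_{k-i+1}$ (indices mod $N$). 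Since $\|x\|^p \geq \sum_k |\sigma_k|^p$ and Jensen's inequality applied to the convex map $t\mapsto t^p$ yields $\sum_k|\sigma_k|^p \geq N^{1-p}\bigl|\sum_k\sigma_k\bigr|^p$, we obtain $\bigl|\sum_k b_{2k} + A\sum_k b_{2k-1}\bigr| \leq N^{1/q}\|x\|$, hence $|f^*(x)|\leq \eta\|x\|$. This Jensen bound is the $L_p$ substitute for the orthogonal-projection calculation used in Lemma \ref{L:conditional}.

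The main obstacle is property (1), which I plan to derive by transferring the long computation in Lemma \ref{L:conditional} line by line, decomposing $\sum_{j=1}^{2M+1} b_j x_j$ as an odd-$e$ part, an odd-$f$ part, and four cross terms $y_{1,1}, y_{1,2}, y_{2,1}, y_{2,2}$ defined exactly as there. The crucial overflow term is
$$y_{1,2} = \sum_{j=M+1}^N \Bigl(\sum_{i=0}^M b_{2i+1} a_{j-i}\Bigr) e_{2j},$$
and Hölder's inequality applied to the inner sum gives
$$\|y_{1,2}\|^p \leq \Bigl(\sum_i |b_{2i+1}|^p\Bigr) \sum_{j=M+1}^N \Bigl(\sum_{i=0}^M a_{j-i}^q\Bigr)^{p/q} \leq \Bigl\|\sum_j b_j x_j\Bigr\|^p \cdot \sum_{n=1}^N \Bigl(\sum_{k=n}^N a_k^q\Bigr)^{p/q} < \eta^p\Bigl\|\sum_j b_j x_j\Bigr\|^p,$$
where the reindexing $n=j-M$ controls the double sum (since $\sum_{i=0}^M a_{j-i}^q \leq \sum_{k=n}^N a_k^q$), and the odd-$e$ coordinates isolate the $b_{2i+1}$ to give $(\sum_i |b_{2i+1}|^p)^{1/p} \leq \|\sum_j b_j x_j\|$. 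Entirely analogous Hölder bounds handle $y_{1,1}, y_{2,1}, y_{2,2}$; assembling them as in the $L_2$ proof yields $\|\sum_{j=1}^{2M+1} b_j x_j\| \leq (1+4\eta)\|\sum_{j=1}^{2N} b_j x_j\|$, and the same bookkeeping controls even partial sums. Setting $\eta=\vp/4$ at the outset gives basic constant at most $1+\vp$ and also ensures the functional norm in (2) and distance in (3) are each $\leq\vp$.
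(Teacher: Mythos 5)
Your proposal is correct, and the construction, the proof of the basic-sequence estimate (1), and the proof of the distance estimate (3) coincide with the paper's argument essentially line by line (same shifted vectors $x_1,x_2$, same $y_{1,1},y_{1,2},y_{2,1},y_{2,2}$ decomposition with H\"older replacing Cauchy--Schwarz, same explicit approximant for the alternating vector). Where you genuinely diverge is property (2): the paper argues that, by strict convexity of the $\ell_p$ ball and the cyclic symmetry of the construction, the functional $f^*$ is maximized on the span at a vector of the form $\sum_j x_{2j-1}+\beta\sum_j x_{2j}$, and then runs a one-parameter computation in $\beta$ (with separate cases $a=0$ and $\lambda=-1$). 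You instead observe directly that for $x=\sum_j b_jx_j$ one has $f^*(x)=N^{-1/q}\eta\sum_k\sigma_k$, where $\sigma_k$ are the coordinates of $x$ along $(e_{2k})_{k=1}^N$, and then the power-mean (Jensen/H\"older) bound $\bigl|\sum_k\sigma_k\bigr|\le N^{1/q}\bigl(\sum_k|\sigma_k|^p\bigr)^{1/p}\le N^{1/q}\|x\|$ gives $|f^*(x)|\le\eta\|x\|$ for every $x$ in the span, with no reduction to symmetric vectors and no case analysis. Your route is shorter and avoids having to justify the symmetry/extremality reduction (which the paper asserts rather than proves in detail); the paper's route has the side benefit of identifying the extremal direction explicitly, mirroring the orthogonal-projection computation of the $L_2$ lemma, but for the stated inequality your direct coordinate estimate suffices and is, if anything, cleaner.
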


\begin{proof}

By Lemma \ref{L:sum_p}, there exists $N\in\N$ and $(a_j)_{j=1}^N\subseteq (0,\infty)$ such that 
\begin{equation}\label{E:a_p}
\sum_{n=1}^N a_n>\vp^{-2}\quad\textrm{ and }\quad\sum_{n=1}^N\Big(\sum_{j=n}^N a_j^q\Big)^{p/q}<\vp^p.
\end{equation}
  
      Consider the space $\ell_p(\Z_{2N}\oplus \Z_{2N})$.   Let $T_1$ be the cyclic right shift operator on $\ell_p(\Z_{2N}\oplus \Z_{2N})$.  That is,
$$T_1(a_1,a_2,...,a_{2N})\oplus(b_1,b_2,...,b_{2N})=(a_{2N},a_1,a_2,...,a_{2N-1})\oplus(b_{2N},b_1,b_2,...,b_{2N-1}).
$$
For $m\in\N$, we let $T_m=(T_1)^m$.  We let $(e_j)_{j=1}^{2N}$ be the unit vector basis of  $\ell_p(\Z_{2N}\oplus 0)$ and  $(f_j)_{j=1}^{2N}$ be the unit vector basis of  $\ell_p(0\oplus\Z_{2N})$.  We denote $(e_j^*)_{j=1}^{2N}$ and $(f_j^*)_{j=1}^{2N}$ to be the biorthogonal functionals to  $(e_j)_{j=1}^{2N}$  and $(f_j)_{j=1}^{2N}$. We let $x_1\in \ell_p(\Z_{2N})\oplus \ell_p(\Z_{2N})$ be the vector 
$x_1= e_1 + \sum_{j=1}^{N} a_{j} e_{2j}+ \sum_{j=1}^N \vp a_{j} f_{2j} $ and $x_2=e_2+\vp  f_1$.  For all $1\leq  n< N$, we let $x_{2n+1}= T_{2n} x_1$ and $x_{2n+2}= T_{2n} x_2$.  That is,

\setlength{\arraycolsep}{1pt} 
\medmuskip = 1mu 
\begin{center}
$\begin{array}{c c c c c c c c c c c c c c c c c c c c c c c c c c c c c c c c c c c c c c}
x_1&=&(&1, &a_1,&0,&a_2,&0,&a_3,&...,&a_{N-1},&0,&a_N&)&\oplus&(&0, &\vp a_1,&0,&\vp a_2,&0,&...&)&\\
x_2&=&(&0,&1,&0,&0,&0,&0,&...&0,&0&0&)&\oplus&(&\vp,&0,&0,&0,&0,&...&)&\\
x_3&=&(&0,&a_N,&1,&a_1,&0,&a_2,&...,&a_{N-2},&0,&a_{N-1}&)&\oplus&(&0,&\vp a_N,&0,&\vp a_1,&0,&...&)&\\
x_4&=&(&0,&0,&0,&1,&0,&0,&...&0,&0&0&)&\oplus&(&0,&0,&\vp ,&0,&0,&...&)&\\
x_5&=&(&0,&a_{N-1},&0,&a_N,&1,&a_1,&...,&a_{N-3},&0,&a_{N-2}&)&\oplus&(&0,&\vp a_{N-1},&0,&\vp a_N,&0,&...&)&\\
x_6&=&(&0,&0,&0,&0,&0,&1,&...&0,&0&0&)&\oplus&(&0,&0,&0,&0,&\vp,&...&)&\\

&&&&&&&&&&\vdots&&&&&&&&&&&&\vdots\\
x_{2N-3}&=&(&0,&a_3,&0,&a_4,&0,&a_5,&...&a_1,&0,&a_2&)&\oplus&(&0,&\vp a_3,&0,&\vp a_4,&0,&...&)&\\
x_{2N-2}&=&(&0,&0,&0,&0,&0,&0,&...&1,&0,&0&)&\oplus&(&0,&0,&0,&0,&0,&...&)&\\
x_{2N-1}&=&(&0,&a_2,&0,&a_3,&0,&a_4,&...&a_N,&1,&a_1&)&\oplus&(&0,&\vp a_2,&0,&\vp a_3,&0,&...&)&\\
x_{2N}&=&(&0,&0,&0,&0,&0,&0,&...&0,&0,&1&)&\oplus&(&0,&0,&0,&0,&0,&...&)&\\
\end{array}$
\end{center}

Let $f^*=\sum_{j=1}^{2N} N^{-1/q} f^*_{j}$ and  $y=\sum_{j=1}^{2N} (-1)^jN^{-1/p}f_{j}$
We will prove that the sequence $(x_j)_{j=1}^{2N}$ satisfies:

\begin{enumerate}
\item[(a)] $(x_j)_{j=1}^{2N}$ is $(1+4\vp)$-basic.
\item[(b)] $f^*(z)\leq\vp\|z\|$ for all $z$ in the span of $(x_j)_{j=1}^{2N}$.
\item[(c)] The distance from $y$ to the span of $(x_j)_{j=1}^{2N}$ is at most $\vp$.
\end{enumerate}

We first prove $(b)$.  As the unit ball of $\ell_p(\Z_{2N})\oplus_p \ell_p(\Z_{2N})$ is strictly convex, there exists a unique unit norm vector $z$ in the span of $(x_j)_{j=1}^{2N}$  so that $f^*(z)$ is maximal.
  By symmetry, $z$ will have the form $\sum_{j=1}^N a x_{2j-1}+\sum_{j=1}^N b x_{2j}$ for some $a,b\in \R$.  One can check that if $a=0$ then $f^*(z)=\vp(1+\vp^p)^{-1/p}<\vp$.
We now assume that $a\neq0$.
  Thus, 
$$f^*(z)=\frac{f^* (\sum_{j=1}^N a x_{2j-1}+\sum_{j=1}^N b x_{2j})}{\|\sum_{j=1}^N a x_{2j-1}+\sum_{j=1}^N b x_{2j}\|}
=\max_{\beta\in\R}\frac{| f^* (\sum_{j=1}^N  x_{2j-1}+\sum_{j=1}^N \beta x_{2j})|}{\|\sum_{j=1}^N x_{2j-1}+\sum_{j=1}^N \beta x_{2j}\|}
$$

Let $A=\sum_{j=1}^N a_j$.  Then we get the following simplified expansion.
\begin{align*}
\sum_{j=1}^N x_{2j-1}+\sum_{j=1}^N \beta x_{2j}
&=\sum_{j=1}^N e_{2j-1}+\sum_{j=1}^{N}\big(\beta+\sum_{i=1}^N a_i\big)e_{2j}
+\sum_{j=1}^{N}\vp \beta f_{2j-1}+\sum_{j=1}^N\big(\vp\sum_{i=1}^N a_i\big)f_{2j} \\
&=\sum_{j=1}^N  e_{2j-1}+\sum_{j=1}^{N}(\beta+A)e_{2j}
+\sum_{j=1}^{N}\vp  \beta f_{2j-1}+\sum_{j=1}^N \vp A f_{2j}\\
\end{align*}

This gives the following two equalities,
\begin{equation}\label{E:norm_p}
\Big\|\sum_{j=1}^N x_{2j-1}+\sum_{j=1}^N \beta x_{2j}\Big\|=\big(N+N|\beta+A|^p+N\vp^p |\beta|^p+N\vp^p A^p\big)^{1/p},
\end{equation}
\begin{equation}\label{E:inner_p}
f^*\Big(\sum_{j=1}^N x_{2j-1}+\sum_{j=1}^N \beta x_{2j}\Big)= N^{1/p}\vp \beta+N^{1/p}\vp  A.
\end{equation}

Let $\beta\in\R$ such that 
$$f^*(z)=\frac{|f^*( \sum_{j=1}^N  x_{2j-1}+\sum_{j=1}^N \beta x_{2j})|}{\|\sum_{j=1}^N x_{2j-1}+\sum_{j=1}^N \beta x_{2j}\|}.$$
For  $\lambda:=\beta/ A$, we have the following two equalities. 
$$
\Big\|\sum_{j=1}^N x_{2j-1}+\sum_{j=1}^N \beta x_{2j}\Big\|=\big(N+N|\lambda A+A|^p+N\vp^p (|\lambda |A)^p+N\vp^p A^p\big)^{1/p}
> \big(N|\lambda A+A|^p\big)^{1/p}=|1+\lambda| A N^{1/p}
$$
$$
f^*\Big(\sum_{j=1}^N x_{2j-1}+\sum_{j=1}^N \beta x_{2j}\Big)= N^{1/p}\vp \lambda A+N^{1/p}\vp  A
= \vp (1+\lambda) A N^{1/p}
$$

If $\lambda=-1$ then by the above equality we would have $ f^*(\sum_{j=1}^N x_{2j-1}+\sum_{j=1}^N \beta x_{2j})= 0$.   Otherwise, we have that,
$$|f^*(z)|< \vp |1+\lambda| A N^{1/p}/(|1+\lambda| A N^{1/p})=\vp
$$
Thus, we have proven (b).  We will now prove (c).  

Recall that $y=\sum_{j=1}^{2N} (-1)^jN^{-1/p}f_{j}$.  We have that
\begin{align*}
\Big\|\Big(\sum_{j=1}^N \frac{1}{\vp A N^{1/p}}x_{2j-1}-\frac{1}{\vp N^{1/p}} x_{2j}\Big)-y\Big\|&=\Big\|\sum_{j=1}^N   \frac{1}{\vp A N^{1/p}}e_{2j-1}\Big\|\\
&=\vp^{-1}A^{-1}\\
&<\vp   \qquad \textrm{ as }A=\sum_{j=1}^N a_j>\vp^{-2}.
\end{align*}

This proves that the distance from $y$ to the span of $(x_j)_{j=1}^{2N}$ is at most $\vp$ and hence we have proven $(c)$.

We now prove $(a)$.
Let $0\leq M<N$ and $(b_j)_{j=1}^{2N}\in\ell_p(\Z_{2N})$.  We will prove  that $\|\sum_{j=1}^{2M+1} b_j x_j\|\leq(1+4\vp)\|\sum_{j=1}^{2N} b_j x_j\|$.  

The series $\sum_{j=1}^{2N} b_j x_j$ is expressed in terms of the basis $(e_j)_{j=1}^{2N}\cup(f_j)_{j=1}^{2N}$ by

\begin{equation}\label{E:full_p} 
\sum_{j=1}^{2N} b_j x_j= \sum_{j=1}^N b_{2j-1}e_{2j-1}+\sum_{j=1}^{N}\Big(b_{2j}+\sum_{i=0}^{N-1} b_{2i+1} a_{j-i}\Big)e_{2j}
+\sum_{j=1}^{N}\vp b_{2j} f_{2j-1}+\sum_{j=1}^N\Big(\vp\sum_{i=0}^{N-1} b_{2i+1} a_{j-i}\Big)f_{2j}.
\end{equation}

The series $\sum_{j=1}^{2M+1} b_j x_j$ is expressed in terms of the basis $(e_j)_{j=1}^{2N}\cup(f_j)_{j=1}^{2N}$ by
\begin{equation}\label{E:part_p}
\sum_{j=1}^{2M+1} b_j x_j= \sum_{j=1}^{M+1} b_{2j-1}e_{2j-1}+y_{1,1}+
y_{1,2}+\sum_{j=1}^{M}\vp b_{2j} f_{2j-1}+y_{2,1}+ y_{2,2}.
\end{equation}
Where, 
$$
y_{1,1}= \sum_{j=1}^{M}\big(b_{2j}+\sum_{i=0}^{M} b_{2i+1} a_{j-i}\big)e_{2j}\quad\textrm{ and }\quad
y_{1,2}=\sum_{j=M+1}^N\big(\sum_{i=0}^{M} b_{2i+1} a_{j-i}\big)e_{2j}
$$
$$
y_{2,1}= \sum_{j=1}^{M}\big(\vp\sum_{i=0}^{M} b_{2i+1} a_{j-i}\big)f_{2j}\quad\textrm{ and }\quad
y_{2,2}=\sum_{j=M+1}^N\big(\vp\sum_{i=0}^{M} b_{2i+1} a_{j-i}\big)f_{2j}
$$

Note that
\begin{equation}\label{E:compare2_p}
\Big\|\sum_{j=1}^{2N} b_j x_j\Big\|^p\geq \Big\|\sum_{j=1}^N b_{2j-1} e_{2j-1}\Big\|^p= \sum_{j=1}^{N}|b_{2j-1}|^p
\end{equation}

We first show that $\|y_{1,2}\|<\vp\|\sum_{j=1}^{2N} b_j x_j\|$.

\begin{align*}
\|y_{1,2}\|^p&=\Big\|\sum_{j=M+1}^N\Big(\sum_{i=0}^{M} b_{2i+1} a_{j-i}\Big)e_{2j}\Big\|^p\\
&=\sum_{j=M+1}^N \left|\sum_{i=0}^{M} b_{2i+1} a_{j-i}\right|^p\\
&\leq \sum_{j=M+1}^N \Big(\sum_{i=0}^{M} |b_{2i+1}|^p\Big)\Big(\sum_{i=0}^{M} a_{j-i}^q\Big)^{p/q}\quad\textrm{ by H\"{o}lder's Inequality,}\\ 
&\leq \Big\|\sum_{j=1}^{2N} b_j x_j\Big\|^p\sum_{j=M+1}^N \Big(\sum_{i=0}^{M} a_{j-i}^q\Big)^{p/q}\quad\textrm{ by \eqref{E:compare2_p}},\\
&\leq \Big\|\sum_{j=1}^{2N} b_j x_j\Big\|^p\sum_{j=1}^N \Big(\sum_{i=j}^{N} a_{i}^q\Big)^{p/q}\\
&< \Big\|\sum_{j=1}^{2N} b_j x_j\Big\|^p \vp^p \qquad\textrm{ by \eqref{E:a_p}}
\end{align*}
Thus we have that,
\begin{equation}\label{E:y12_p}
\|y_{1,2}\|<\vp\Big\|\sum_{j=1}^{2N} b_j x_j\Big\|.
\end{equation}
The same argument as above gives the following inequality.
\begin{equation}\label{E:y12same_p}
 \Big\| \sum_{j=1}^{M}\Big(\sum_{i=M+1}^{N-1} b_{2i+1} a_{j-i}\Big)e_{2j}\Big\| <\vp\Big\|\sum_{j=1}^{2N} b_j x_j\Big\|
\end{equation}
We can now estimate $\|y_{1,1}\|$.
\begin{align*}
\|y_{1,1}\|&=\Big\| \sum_{j=1}^{M}\Big(b_{2j}+\sum_{i=0}^{M} b_{2i+1} a_{j-i}\Big)e_{2j}\Big\|\\
&<\Big\|\sum_{j=1}^{M}\Big(b_{2j}+\sum_{i=0}^{M} b_{2i+1} a_{j-i}\Big)e_{2j}\Big\|-\Big\|\sum_{j=1}^{M}\Big(\sum_{i=M+1}^{N-1} b_{2i+1} a_{j-i}\Big)e_{2j}\Big\| +\vp\Big\|\sum_{j=1}^{2N} b_j x_j\Big\|\quad\textrm{ by }\eqref{E:y12same_p}\\
&\leq\Big\|\sum_{j=1}^{M}\Big(b_{2j}+\sum_{i=0}^{N-1} b_{2i+1} a_{j-i}\Big)e_{2j}\Big\|+\vp\Big\|\sum_{j=1}^{2N} b_j x_j\Big\|\\
&=\Big\|\sum_{j=1}^{M}\Big(b_{2j}+\sum_{i=1}^{N} b_{2j-2i-1} a_{i}\Big)e_{2j}\Big\|+\vp\Big\|\sum_{j=1}^{2N} b_j x_j\Big\|\\
\end{align*}
Thus, we have that
\begin{equation}\label{E:y11_p}
\|y_{1,1}\|<\Big\|\sum_{j=1}^{M}\Big(b_{2j}+\sum_{i=1}^{N} b_{2j-2i-1} a_{i}\Big)e_{2j}\Big\|+\vp\Big\|\sum_{j=1}^{2N} b_j x_j\Big\|
\end{equation}

The same technique for estimating $y_{1,1}$ and $y_{1,2}$ gives that 
\begin{equation}\label{E:y2_p}
\|y_{2,1}\|<\Big\|\sum_{j=1}^{M}\Big(\vp\sum_{i=1}^{N} b_{2j-2i-1} a_{i}\Big)f_{2j}\Big\|+\vp\Big\|\sum_{j=1}^{2N} b_j x_j\Big\|\quad
\textrm{ and }\quad \|y_{2,2}\|<\vp\Big\|\sum_{j=1}^{2N} b_j x_j\Big\|.
\end{equation}

We consider \eqref{E:part_p} with the inequalities \eqref{E:y12_p}, \eqref{E:y11_p}, and \eqref{E:y2_p} to get
\begin{align*}
\Big\|\sum_{j=1}^{2M+1} b_j x_j\Big\|&<  \Big\|\sum_{j=1}^{M+1} b_{2j-1}e_{2j-1}+\sum_{j=1}^{M}\Big(b_{2j}+\sum_{i=1}^N b_{2j-2i-1} a_i\Big)e_{2j}\\
&\quad\qquad+\sum_{j=1}^{M}\vp b_{2j} f_{2j-1}+ \sum_{j=1}^M\Big(\vp\sum_{i=1}^N b_{2j-2i-1} a_i\Big)f_{2j}\Big\|+4\vp \Big\|\sum_{j=1}^{2N} b_j x_j\Big\|\\
&\leq \Big\|\sum_{j=1}^{N} b_{2j-1}e_{2j-1}+\sum_{j=1}^{N}\Big(b_{2j}+\sum_{i=1}^N b_{2j-2i-1} a_i\Big)e_{2j}\\
& \quad\qquad+\sum_{j=1}^{N}\vp b_{2j} f_{2j-1}+ \sum_{j=1}^N\Big(\vp\sum_{i=1}^N b_{2j-2i-1} a_i\Big)f_{2j}\Big\|+4\vp \Big\|\sum_{j=1}^{2N} b_j x_j\Big\|\\
&= \Big\|\sum_{j=1}^{2N} b_j x_j\Big\|+4\vp \Big\|\sum_{j=1}^{2N} b_j x_j\Big\|\
\end{align*}

This proves for all $0\leq M< N$ that $\|\sum_{j=1}^{2M+1} b_j x_j\|\leq(1+4\vp)\|\sum_{j=1}^{2N} b_j x_j\|$.  The same argument proves that also $\|\sum_{j=1}^{2M} b_j x_j\|\leq(1+4\vp)\|\sum_{j=1}^{2N} b_j x_j\|$.  Thus, the sequence $(x_j)_{j=1}^{2N}$ has basic constant $(1+4\vp)$ and we have proven $(a)$.
\end{proof}

We now show how the conditional positive basic sequence constructed in Lemma \ref{L:conditional_p} can be inductively used to build a basic sequence in $L_p(\R)$.
We will construct a positive basic sequence in $L_p(\R)$ which contains a perturbation of a Haar type system in $L_p([0,1])$.  Recall that a sequence of vectors $(g_j)_{j=0}^\infty$ in $L_p([0,1])$ is called a {\em Haar type system } if there is a sequence of partitions $(\{E_{j,n}\}_{j=0}^{2^n-1})_{n=0}^\infty$ of $[0,1]$ such that  $E_{0,0}=[0,1]$ and $g_0=1_{[0,1]}$ and for all $n\in \N$ and $0\leq j\leq 2^{n-1}-1$ we have that $\{E_{2j,n},E_{2j+1,n}\}$ is a partition of $E_{j,n-1}$ with $\lambda(E_{2j,n})=\lambda(E_{2j+1,n})=2^{-n}$ and  $g_{2^{n-1}+j}=2^{(n-1)/p}(1_{E_{{2j},n}}-1_{E_{{2j+1},n}})$.  Note that the  Haar basis for $L_p([0,1])$ is a Haar type system, and every Haar type system in $L_p([0,1])$ is 1-equivalent to the Haar basis.  Thus, if $(g_j)_{j=0}^\infty$ is a Haar type system in $L_p([0,1])$ then the closed span of $(g_j)_{j=0}^\infty$ is isometric to $L_p([0,1])$.  We will denote the usual Haar basis for $L_p([0,1])$ by $(h_j)_{j=0}^\infty$, and denote its dual sequence by $(h^*_j)_{j=0}^\infty$ (which is just the Haar basis for $L_q([0,1])$ for $1/p+1/q=1$.)

\begin{thm}\label{T:basisLp}
For all $1<p<\infty$, there exists a positive Schauder basic sequence $(z_j)_{j=0}^\infty$ in $L_p(\R)$  such that $L_p(\R)$ is isomorphic to a subspace of the closed span of $(z_j)_{j=0}^\infty$.
\end{thm}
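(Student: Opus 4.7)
My plan is to mirror the inductive template of Theorem~\ref{T:basisL2}, replacing Lemma~\ref{L:conditional} with Lemma~\ref{L:conditional_p} and aiming only for the looser basis constant $2+\vp$. I fix $\vp>0$ and $\vp_j\searrow 0$ with $\prod_j(1+\vp_j)\leq 2+\vp$, and let $(h_j)_{j=0}^\infty$ be a Haar type system in $L_p([0,1])\cong L_p(\R)$, enumerated by generation. The aim is to inductively construct nonnegative piecewise-constant vectors $(z_j)$ in $L_p(\R)$ and indices $N_0<N_1<\cdots$ so that $(z_j)_{j=0}^{N_k}$ is $\prod_{j\leq k}(1+\vp_j)$-basic and each $h_k$ admits a summably close approximant $\tilde h_k\in\mathrm{span}\{z_j:j\leq N_k\}$; by the small perturbation lemma, $(\tilde h_j)$ will then be equivalent to $(h_j)$, giving an isomorphic copy of $L_p(\R)$ inside $\overline{\mathrm{span}}(z_j)$.

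For the inductive step, write the next Haar vector as $h_{k+1}=2^{(n-1)/p}(1_{E_{2i,n}}-1_{E_{2i+1,n}})$, supported on $E:=E_{i,n-1}$. I would partition $E_{2i,n}$ and $E_{2i+1,n}$ into $N$ equal dyadic subintervals each, interleave them as $F_1,\ldots,F_{2N}$ with $F_{2j}\subseteq E_{2i,n}$ and $F_{2j-1}\subseteq E_{2i+1,n}$, and choose $2N$ fresh pairwise disjoint unit intervals $H_1,\ldots,H_{2N}$ outside $[0,1]$ and all earlier supports. For a small parameter $\vp'>0$ (fixed later), Lemma~\ref{L:conditional_p} produces nonnegative $(x_j)_{j=1}^{2N}$ in $\ell_p(\Z_{2N})\oplus\ell_p(\Z_{2N})$; defining the lattice isometric embedding
\[\Psi(a,b)=\sum_{j=1}^{2N} a_j\,1_{H_j}+\sum_{j=1}^{2N} b_j\,\lambda(F_j)^{-1/p}\,1_{F_j}\]
and setting $z_{N_k+j}:=\Psi(x_j)$ yields the new block. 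A direct check shows that $\Psi$ carries the alternating vector $y=\sum_j(-1)^jN^{-1/p}f_j$ of property~(3) to a nonzero scalar multiple of $h_{k+1}$, so property~(3) produces $\tilde h_{k+1}\in\mathrm{span}\{z_j:j\leq N_{k+1}\}$ with $\|\tilde h_{k+1}-h_{k+1}\|$ of order $\vp'$.

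The main obstacle is verifying basicity across blocks, since the $L_2$ argument uses orthogonal projections which have no $L_p$ analogue. I would arrange, by choosing each $N$ to be a large power of $2$ and processing Haar vectors by generation, that the $F$-partitions of later blocks are dyadic refinements of the earlier ones on nested cells; this ensures that every previously built $z_i$ is constant on the current cell $E$. For such $x$ with $x|_E=c\,1_E$ and $z=\Psi(A,B)$ in the new block, the supports of $x|_{[0,1]\setminus E}$, $\sum_j A_j1_{H_j}$, and $E$ are pairwise disjoint, so a direct computation (with $\mu:=\lambda(F_j)=|E|/(2N)$) gives
\[\|x+z\|_p^p=\|x|_{[0,1]\setminus E}\|_p^p+\|A\|_{\ell_p}^p+\|c\mu^{1/p}\mathbf{1}+B\|_{\ell_p}^p,\]
while $\|x\|_p^p$ differs only in that the last summand is replaced by $\|c\mu^{1/p}\mathbf{1}\|_{\ell_p}^p$. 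Testing the last $\ell_p$-norm above against the unit $\ell_q$-functional $(2N)^{-1/q}\mathbf{1}$, and using property~(2) of Lemma~\ref{L:conditional_p} to bound $|\sum_j B_j|\leq\vp'N^{1/q}\|z\|_p$, I obtain
\[\|c\mu^{1/p}\mathbf{1}+B\|_{\ell_p}\geq \|c\mu^{1/p}\mathbf{1}\|_{\ell_p}-\vp'2^{-1/q}\|z\|_p;\]
a short bootstrap using $\|B\|_{\ell_p}\leq \|c\mu^{1/p}\mathbf{1}+B\|_{\ell_p}+\|c\mu^{1/p}\mathbf{1}\|_{\ell_p}$ then yields the cross estimate $\|x\|_p\leq(1+O(\vp'))\|x+z\|_p$. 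Combining this with the internal $(1+\vp')$-basicity of the new block and the inductive basicity of the old block, and taking $\vp'$ small enough, produces the $\prod_{j\leq k+1}(1+\vp_j)$-basicity of the enlarged sequence, closing the induction and the theorem.
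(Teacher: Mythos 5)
The template you follow (inductive blocks from Lemma~\ref{L:conditional_p}, the embedding $\Psi$, fresh intervals $H_j$, property~(3) producing approximants of the Haar vectors) is the same as the paper's, but the step on which everything hinges --- cross-block basicity --- rests on the claim that every previously built $z_i$ is constant on the current cell $E$, and that claim is false. When you process a cell $E_{i',n'-1}$, the block vectors restricted to $[0,1]$ are supported on the $2N'$ pieces $F_1,\dots,F_{2N'}$ of its two halves and carry the \emph{non-constant} weights $\vp' a_1,\dots,\vp' a_{N'}$: for instance $\Psi(x_1)|_{[0,1]}=\sum_j \vp' a_j\,\lambda(F_{2j})^{-1/p}1_{F_{2j}}$, and these spread-out weights are exactly the source of the conditionality you need, so they cannot be removed. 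At the next generation the current cell $E$ is one of those halves, hence contains all $N'$ old pieces, and the earlier $z_i$ takes the distinct values (proportional to) $\vp' a_1,\dots,\vp' a_{N'}$ on $E$; since $N'$ is enormous, the old pieces are much \emph{finer} than the new cells, so "later partitions refine earlier ones" does nothing to restore constancy. Consequently a general $x$ in the old span does not satisfy $x|_E=c\,1_E$, the identity $\|x+z\|_p^p=\|x|_{[0,1]\setminus E}\|_p^p+\|A\|_{\ell_p}^p+\|c\mu^{1/p}\mathbf{1}+B\|_{\ell_p}^p$ is unavailable, and the cross estimate --- and with it the inductive $\prod_{j\le k+1}(1+\vp_j)$-basicity --- collapses.

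The paper avoids any direct old-versus-new norm comparison (there is no projection to exploit in $L_p$) and instead does three things you would need to import: (i) it builds the Haar type system $(g_j)$ \emph{adaptively}, choosing each new split and the finer partition $(G_j)$ so that every function in $\mathrm{span}(h_j)_{j\le M_k}$ has the same distribution on all the pieces (this is why it cannot work with the standard dyadic Haar system fixed in advance, as you do); (ii) it applies property~(2) of Lemma~\ref{L:conditional_p} with the sharpened tolerance $\vp_{k+1}/(M_k+1)$ to show that every vector $z$ of the new block has tiny Haar coefficients $h_m^*(z)$ for $m\le M_k$, i.e.\ $\|P_{M_k}z\|\le\vp_{k+1}\|z\|$; and (iii) it deduces basicity of the whole sequence from the monotonicity of the Haar basis applied to the block sums $y_n=x_n-P_{M_{n-1}}x_n$ (together with disjointness of supports outside $[0,1]$), and only then combines with the in-block constant, paying a factor $2$ and ending with constant $2(1+\vp)^2$ rather than a telescoping $\prod(1+\vp_j)$. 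A repaired version of your argument would have to replace the constancy claim by such an equal-distribution condition and control the old-new interaction through the Haar coefficients (functionally, via property~(2)) rather than through an exact $\ell_p$ splitting; note also that even granting your splitting formula, initial segments that mix the old span with a \emph{partial} new block need more than the single inequality $\|x\|_p\le(1+O(\vp'))\|x+z\|_p$ to yield a $(1+\vp_j)$ increment per step.
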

\begin{proof}
Let $0<\vp<1$ and  $\vp_j\searrow 0$ such that $\sum 2\vp_j<\vp$ and $\prod (1+\vp_j)<1+\vp$. 
We will inductively construct a sequence of non-negative vectors $(z_j)_{j=0}^\infty$ in $L_p(\R)$, increasing sequences of integers $(M_j)_{j=0}^\infty$ and $(N_j)_{j=0}^\infty$, and a Haar type system $(g_j)_{j=0}^\infty$ in $L_p([0,1])$ such that $M_0=N_0=0$, $z_0=g_0=1_{[0,1]}$, and for all $n\in\N$ we have that
\begin{enumerate}
\item[(a)] $g_n\in span (h_j)_{j=M_{n-1}+1}^{M_{n}}$ and $(g_j)_{j=0}^n$ is an initial segment of a Haar type system.
\item[(b)] $span (z_j|_{[0,1]})_{j=0}^{N_n}\subseteq span (h_j)_{j=0}^{M_{n}}$ and each of the functions $(z_j|_{[0,1]^c})_{j=0}^{N_{n-1}}$ have disjoint support from each of the functions $(z_j|_{[0,1]^c})_{j=N_{n-1}+1}^{N_{n}}$.
\item[(c)] If $P_{M_{n-1}}$ is the basis projection onto $span(h_j)_{j=0}^{M_{n-1}}$ then $\|P_{M_{n-1}} x\|\leq \vp_n\|x\|$ for all $x\in span (z_j)_{j=N_{n-1}+1}^{N_n}$.  
\item[(d)] $(z_j)_{j=N_{n-1}+1}^{N_n}$ is $(1+\vp)-$basic,
\item[(e)] $dist(g_n,span_{N_{n-1}<j\leq N_n}(z_j))<\vp_n.$
\end{enumerate}

Before proving that this is possible, we show that building such a sequence $(z_j)_{j=0}^\infty$ will prove our theorem.  By (e), the span of $(z_j)_{j=0}^\infty$ contains a perturbation of a Haar type system for $L_p([0,1])$ and hence $L_p([0,1])$ is isomorphic to a subspace of the closed  span of $(z_j)_{j=0}^\infty$.  We now show that $(z_j)_{j=0}^\infty$ is a basic sequence.  Let $x=\sum_{j=0}^\infty a_j z_j\in span(z_j)_{j=0}^\infty$ and let $N\in\N$.   We will prove that $\|\sum_{j=0}^\infty a_j z_j\|\geq \frac{1}{2(1+\vp)^2} \|\sum_{j=0}^N a_j z_j\|$.

We denote $x_0=a_0 z_0$  and $x_n=\sum_{j=N_{n-1}+1}^{N_n} a_j z_j$ for all $n\in\N$.  We denote $y_0=x_0$ and $y_n=x_n-P_{M_{n-1}}x_n$ for all $n\in\N$.  By (c), we have that $\|y_n-x_n\|\leq \vp_n \|x_n\|$. As the Haar sequence is $1$-basic, we have by (b) that $(y_n)_{n=0}^\infty$ is $1$-basic.  As $(x_n)_{n=0}^\infty$ is a perturbation of $(y_n)_{n=0}^\infty$, we have that $(x_n)_{n=0}^\infty$ is $(1+\vp)$-basic. Let $K\in\N\cup \{0\}$ such that $N_{K}<N\leq N_{K+1}$.
Thus, 
$$\|x\|\geq (1+\vp)^{-1}\|\sum_{n=0}^K x_n\|\textrm{ and } \|x\|\geq (1+\vp)^{-1}\| x_{K+1}\|
$$
By (d), we have that $\|x_{K+1}\|\geq (1+\vp)^{-1} \|\sum_{j=N_{K}+1}^{N} a_j z_j\|$.
Thus, we have that 
\begin{align*}
\Big\|\sum_{j=0}^\infty a_j z_j\Big\|&\geq (1+\vp)^{-1}\max\Big(\Big\|\sum_{n=0}^K x_n\Big\|,\|x_{K+1}\|\Big)\\    &\geq (1+\vp)^{-1}\max\Big(\Big\|\sum_{n=0}^K x_n\Big\|,(1+\vp)^{-1}\Big\|\sum_{j=N_{K}+1}^{N} a_j z_j\Big\|\Big)\\  
 &\geq 2^{-1}(1+\vp)^{-2}\Big\|\sum_{n=0}^K x_n+\sum_{j=N_{K}+1}^{N} a_j z_j\Big\|\\  
 &= 2^{-1}(1+\vp)^{-2}\Big\|\sum_{j=0}^{N} a_j z_j\Big\|
\end{align*}
This proves that $(z_j)_{j=0}^\infty$ is $2(1+\vp)^2$-basic.
 Thus all that remains is to construct $(z_j)_{j=0}^\infty$ and $(g_j)_{j=0}^\infty$ by induction.

For the  base case we take $z_0=g_0=1_{[0,1]}$, $M_0=N_0=0$, $M_{-1}=N_{-1}=-1$, and we formally define $P_{-1}=0$ as the projection onto the zero vector.  Thus all five conditions are trivially satisfied for $n=0$.  Now let $k\in\N_0$ and assume that $(g_m)_{m=0}^k$ and $(z_m)_{m=0}^{N_k}$ have been chosen to satisfy conditions (a),(b),(c),(d), and (e).  For each $ m\in\N$ we let $m=2^{n_m-1}+j_m$ where $n_m\in\N$ and  $0\leq j_m< 2^{n_m-1}$.  For $1\leq m\leq k$, we denote $E_{2j_m,n_m}\subseteq[0,1]$ to be the support of $g_{m}^+$ and  $E_{2j_m+1,n_m}\subseteq[0,1]$ to be the support of $g_{m}^-$.  Being an initial segment of a Haar type system, $E_{2j_m,n_m}\cup E_{2j_m+1,n_m}=E_{j_m,n_m-1}$ for $1\leq m\leq k$, and for the induction we must find an appropriate partition of $E_{j_{k+1},n_{k+1}-1}$. Note that  if $j_k+1<2^{n_k-1}$ then $j_{k+1}=j_k+1$ and $n_{k+1}=n_k$; if $j_k+1=2^{n_k-1}$  then $j_{k+1}=0$ and $n_{k+1}=n_k+1.$ 

As $(g_m)_{m=0}^{k}$ is contained in the span of the initial segment of the Haar basis $(h_j)_{j=0}^{M_{k}}$, we may partition $E_{j_{k+1},n_{k+1}-1}$  into two sets of equal measure $E_{2j_{k+1},n_{k+1}}$ and $E_{2j_{k+1}+1,n_{k+1}}$ such that both sets are a finite union of disjoint dyadic intervals and for all $x\in span(h_j)_{j=0}^{M_{k}}$, the distribution of $x|_{E_{2j_{k+1},n_{k+1}}}$ is the same as the distribution of $x|_{E_{2j_{k+1}+1,n_{k+1}}}$.  We let $g_{k+1}=2^{(n_{k+1}-1)/p}(1_{E_{2j_{k+1},n_{k+1}}}-1_{E_{2j_{k+1}+1,n_{k+1}}})$.
As the support of $g_{k+1}^+$ and the support of $g_{k+1}^-$ are both finite unions of disjoint dyadic intervals, we have that $g_{k+1}\in span(h_j)_{j=1}^\infty$.  Let $0\leq m\leq M_{k}$. 
As the distribution of $h_m|_{E_{2j_{k+1},n_{k+1}}}$ is the same as the distribution of $h_m|_{E_{2j_{k+1}+1,n_{k+1}}}$, we have that $h_m^*(g_{k+1})=0$. Thus, $g_{k+1}\in span(h_j)_{j=M_{k}+1}^\infty$.

Thus, we have the following three properties.
\begin{enumerate}
\item[($\alpha$)] $(g_j)_{j=0}^{k+1}$ is the initial segment of a Haar type system in $L_p([0,1])$,
\item[($\beta$)] $g_{k+1}\in span(h_j)_{j=M_{k}+1}^\infty$,
\item[($\gamma$)] For all $x\in span(h_j)_{j=0}^{M_{k}}$, the distribution of $x|_{supp(g_{k+1}^+)}$ is the same as the distribution of $x|_{supp(g_{k+1}^-)}$.
\end{enumerate}

 By Lemma \ref{L:conditional_p} there exists $N\in\N$ and $(x_j)_{j=1}^{2N}$ in $\ell_p(\Z_{2N}\oplus\Z_{2N})$ such that 
\begin{enumerate}
\item $(x_j)_{j=1}^{2N}$ is ($1+\vp$)-basic.
\item If $f=(0)_{j=1}^{2N}\oplus((2N)^{-1/q})_{j=1}^{2N} \in \ell_q(\Z_{2N}\oplus\Z_{2N})$ then $(2N)^{1/q}|f(x)|\leq \frac{\vp_{k+1}}{M_k+1}\|x\|$  for all $x$ in the span of $(x_j)_{j=1}^{2N}$.
\item The distance from $(0)_{j=1}^{2N}\oplus((-1)^j (2N)^{-1/p})$ to the span of $(x_j)_{j=1}^{2N}$ is at most $\vp_{k+1}$.
\end{enumerate}

As in the proof of Theorem \ref{T:basisL2}, there exists a sequence of finite unions of disjoint dyadic intervals $(G_{j})_{j=1}^{2N}$ in $[0,1]$ such that 
\begin{enumerate}
\item[(i)] The sequence $(G_{j})_{j=1}^{2N}$ is pairwise disjoint and $\lambda(G_j)=\lambda(G_i)$ for all $i,j$.
\item[(ii)] $\cup_{j=1}^N G_{2j-1}$ is the support of $g_{k+1}^+$ and $\cup_{j=1}^N G_{2j}$ is the support of $g_{k+1}^-.$  
\item[(iii)] For all $x\in span (h_j)_{j=0}^{M_{k}}$, the sequence of functions $(x|_{G_{j}})_{j=1}^{2N}$ all have the same distribution.
\end{enumerate}

 Let $(H_j)_{j=1}^{2N}$ be a sequence of unit length intervals in $\R \setminus [0,1]$ with pairwise disjoint support which is disjoint from the support of $z_j$ for all $0\leq j\leq N_k$.
 We now define a map $\Psi:\ell_p(\Z_{2N}\oplus\Z_{2N})\rightarrow L_p(\R)$ by 
 $$\Psi(a_1,...,a_{2N},b_1,...,b_{2N})=\sum_{j=1}^{N} (2N)^{1/p}b_{2j-1} 1_{G_{2j-1}}g_{k+1}^+ +\sum_{j=1}^{N} (2N)^{1/p}b_{2j} 1_{ G_{2j}}g_{k+1}^-+\sum_{j=1}^{2N} a_j 1_{H_{j}}
 $$
By (i), (ii), and that $\|g_{k+1}^-\|=\|g_{k+1}^+\|=2^{-1/p}$ we have that $\|1_{G_{2j-1}}g_{k+1}^+\|=(2N)^{-1/p}$ and $\|1_{G_{2j}}g_{k+1}^-\|=(2N)^{-1/p}$ for all $1\leq j\leq N$.  Thus, $\Psi$ is an isometric embedding and maps positive elements of $\ell_p(\Z_{2N}\oplus\Z_{2N})$ to positive functions in $L_p(\R)$.  We let $N_{k+1}=N_k+2N$ and let $z_{N_k+j}=\Psi(x_j)$ for all $1\leq j\leq 2N$.  Thus, (d) is clearly satisfied.

Note that $\Psi((0,...,0)\oplus(\frac{1}{(2N)^{1/p}},\frac{-1}{(2N)^{1/p}},...,\frac{1}{(2N)^{1/p}},\frac{-1}{(2N)^{1/p}}))=g_{k+1}$, thus by (3) the distance from $g_{k+1}$ to the span of $(z_j)_{j=N_k+1}^{N_{k+1}}$ is at most $\vp_{k+1}$ which proves (e).


Let $z\in span(z_j)_{j=N_{k}+1}^{N_{k+1}}$ with $\|z\|=1$.  We now prove that $\|P_{M_k} z\|\leq\vp_{k+1}$. 
Note that $P_{M_k}(z)=\sum_{j=0}^{M_k} h_j^*(z) h_j$.
Let $1\leq m\leq M_k$.  We have that the functions $(h_m|_{G_j})_{j=1}^{2N}$ all have equal distribution and $g_{k+1}\in span(h_j)_{j=M_{k}+1}^\infty$. 
Hence, $h_m^*(1_{G_j})$ is independent of $j$.   Let $x=(a_1,...,a_{2N},b_1,...,b_{2N})\in span(x_j)_{j=1}^{2N}$ such that $\Psi(x)=z$. Let $f=(0)_{j=1}^{2N}\oplus((2N)^{-1/q})_{j=1}^{2N} \in \ell_q(\Z_{2N}\oplus\Z_{2N})$.  By (2), we have that $(2N)^{1/q}|f(x)|\leq \frac{\vp_{k+1}}{M_k+1}$.  Since the  biorthogonal functionals $(h_j^*)_{j=0}^\infty$ form the standard Haar basis in $L_q([0,1])$, $h_m^*$ is a multiple of $h_m$, and we denote this multiple by $C_{p,m}$.
We now have that 
\begin{align*}
    |h_m^*(z)|&=C_{p,m}|\int_0^1 h_m z \, dt|\\
    &=C_{p,m}\Big|\int_0^1 h_m\Psi(x) dt\Big|\\
    &=C_{p,m}\Big|\int_0^1 h_m\sum_{j=1}^{N} (2N)^{1/p}b_{2j-1} 1_{G_{2j-1}}g_{k+1}^+ +\sum_{j=1}^{N} (2N)^{1/p}b_{2j} 1_{ G_{2j}}g_{k+1}^- dt\Big|\\
        &=
        \Big|\sum_{j=1}^{N} b_{2j-1} +\sum_{j=1}^{N} b_{2j} \Big|(2N)^{1/p}2^{(n_{k+1}-1)/p}\Big|h_m^*(1_{G_1})\Big|\\
        &\leq (2N)^{1/q}|f(x)|\leq \frac{\vp_{k+1}}{M_k+1}
\end{align*}
Thus we have that $\|P_{M_k} z\|=\|\sum_{j=0}^{M_k} h_j^*(z) h_j\|\leq \sum_{j=0}^{M_k}\|h_j^*(z) h_j\|\leq\vp_{k+1}$.  This proves (c).  For all $1\leq j\leq 2N$, we have that  $G_j$ is a finite union of disjoint dyadic intervals.  Thus, $span (z_j|_{[0,1]})_{j=N_k+1}^{N_{k+1}}\subseteq span (h_j)_{j=0}^{\infty}$.  By ($\beta$), we also have that $g_{k+1}\in span (h_j)_{j=M_k+1}^{\infty}$. We now choose $M_{k+1}\in\N$ such that $span (z_j|_{[0,1]})_{j=N_k+1}^{N_{k+1}}\subseteq span (h_j)_{j=0}^{M_{k+1}}$ and $g_{k+1}\in span (h_j)_{j=0}^{M_{k+1}}$.  Thus, (a) holds and our proof is complete.

\end{proof}

 \section{Schauder frames}\label{S:frame}
  
 Previously, we have considered Schauder bases for Banach spaces, which give unique representations for vectors.  Given a Banach space $X$ with dual $X^*$, a sequence of pairs $(x_j,f_j)_{j=1}^\infty$ in $X\times X^*$ is called a {\em Schauder frame} or {\em quasi-basis} of $X$ if 
  \begin{equation}\label{E:frame}
x=\sum_{j=1}^\infty f_j(x) x_j\qquad\textrm{ for all }x\in X.  
\end{equation}
A Schauder frame is a possibly redundant coordinate system in that the sequence of coefficients $(f_j(x))_{j=1}^\infty$ which can be used to reconstruct $x$ in \eqref{E:frame} may not be unique.  Note that if $(x_j)_{j=1}^\infty$ is a Schauder basis of $X$ with biorthogonal functionals $(x_j^*)_{j=1}^\infty$ then $(x_j,x_j^*)_{j=1}^\infty$ is a Schauder frame of $X$.  Thus, Schauder frames are a generalization of Schauder bases.

For all $1\leq p<\infty$, there does not exist an unconditional Schauder frame $(x_j,f_j)_{j=1}^\infty$ for $L_p(\R)$ such that $(x_j)_{j=1}^\infty$ is a sequence of non-negative functions.  However, for all $1\leq p<\infty$, there does  exist a conditional Schauder frame $(x_j,f_j)_{j=1}^\infty$ for $L_p(\R)$ such that $(x_j)_{j=1}^\infty$ is a sequence of non-negative functions \cite{PS}. Indeed, if $(e_j)_{j=1}^\infty$ is a Schauder basis for $L_p(\R)$ with biorthogonal functionals $(e_j^*)_{j=1}^\infty$ then we may define a Schauder frame $(x_j, f_j)_{j=1}^\infty$ for $L_p(\R)$ by $x_{2j}=e_j^+$, $x_{2j-1}=e_j^-$, $f_{2j}=e_j^*$, and $f_{2j-1}=-e_j^*$ for all $j\in\N$.  

For each $1\leq p<\infty$ and $\lambda\in\R$, we may define the right translation operator $T_\lambda:L_p(\R)\rightarrow L_p(\R)$ by $T_\lambda f(t)=f(t-\lambda)$.  Given $1\leq p<\infty$, $f\in L_p(\R)$, and $(\lambda_j)_{j=1}^\infty\subseteq \R$, there have been many interesting results on the possible structure of $(T_{\lambda_j}f)_{j=1}^\infty$, and the relation on the values $(\lambda_j)_{j=1}^\infty$ can be very subtle.  For example, if $1\leq p\leq 2$ then a simple fourier transform argument gives that $(T_{j} f)_{j\in\Z}$ does not have dense span in $L_p(\R)$ \cite{AO}.  However, if $\vp_j\neq0$ for all $j\in\Z$ and $\vp_j\rightarrow 0$ for $|j|\rightarrow \infty$ then there does exist $f\in L_2(\R)$ such that $(T_{j+\vp_j} f)_{j\in\Z}$ has dense span in $L_2(\R)$ \cite{O}.  For any $(\lambda_j)_{j=1}^\infty\subseteq \R$, $1\leq p<\infty$, and $f\in L_p(\R)$ the sequence $(T_{\lambda_j} f)_{j=1}^\infty$ is not an unconditional Schauder basis for $L_p(\R)$ (\cite{OZ} for $p=2$, \cite{OSSZ} for $1<p\leq 4$, and \cite{FOSZ} for $4< p$).  However, if $2<p$ and $(\lambda_j)_{j=1}^\infty$ is unbounded then there exists $f\in L_p(\R)$ and a sequence of functionals $(g_j)_{j=1}^\infty$ such that $(T_{\lambda_j}f, g_j)_{j=1}^\infty$ is an unconditional Schauder frame of $L_p(\R)$.  It was not known for $1\leq p<2$  if there exists $(\lambda_j)_{j=1}^\infty\subseteq \R$, $f\in L_p(\R)$, and a sequence of functionals $(g_j)_{j=1}^\infty$ such that $(T_{\lambda_j} f, g_j)_{j=1}^\infty$ is an unconditional Schauder frame or even conditional Schauder frame  for $L_p(\R)$.  If the sequence $(g_j)_{j=1}^\infty$ is semi-normalized (in particular  $(\|g_j\|^{-1})_{j=1}^\infty$ is bounded) then $(T_{\lambda_j} f, g_j)_{j=1}^\infty$ cannot be an unconditional Schauder frame for $L_p(\R)$ for $1\leq p\leq 2$ \cite{BC}.

We will prove that for all $1\leq p<\infty$ that there exists a single non-negative function $f\in L_p(\R)$ such that $(T_{\lambda_j} f, f_j)_{j=1}^\infty$ is a Schauder frame for $L_p(\R)$ for some sequence of constants $(\lambda_j)_{j=1}^\infty$ and some sequence of functionals $(f_j)_{j=1}^\infty$.  We will obtain this as a corollary from the following general result about the existence of  certain Schauder frames, which we believe to be of independent interest.
The proof of the following theorem is inspired by Pelczynski's proof that every separable Banach space with the bounded approximation property is isomorphic to a complemented subspace of a Banach space with a Schauder basis \cite{P}.

\begin{thm} \label{T:1}
Let $X$ be a Banach space with a Schauder basis $(e_j)_{j=1}^\infty$.  Suppose that $D\subseteq X$ is a subset whose span is dense in $X$.  Then there exists a Schauder frame (quasi-basis) for $X$ whose vectors are elements of $D$.
\end{thm}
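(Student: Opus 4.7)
The plan is to perturb the Schauder basis $(e_j)$ to a basis $(v_j)$ contained in $\spa(D)$, and then to decompose each $v_j$ as a finite sum of elements of $D$ whose partial sums are uniformly controlled; enumerating these decompositions block by block will produce a Schauder frame for $X$ with vectors in $D$.

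For the perturbation step, normalize so that $\|e_n\|=1$, let $K=\sup_n\|P_n\|$ be the basis constant, and choose $\delta_n\in(0,1/2)$ with $\sum_n 2K\|e_n^*\|\,\delta_n<1$. Since $\spa(D)$ is dense in $X$, we may pick $v_n\in\spa(D)$ with $\|v_n-e_n\|<\delta_n$. By the Krein--Milman--Rutman perturbation theorem, $(v_n)$ is then a Schauder basis of $X$ equivalent to $(e_n)$, with biorthogonal functionals $(v_n^*)\subset X^*$ satisfying $\sup_n\|v_n^*\|<\infty$ and $1/2\le\|v_n\|\le 3/2$.

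For each $n$, fix any finite representation $v_n=\sum_{l=1}^{L_n}c_{n,l}d_{n,l}$ with $d_{n,l}\in D$, and set $N_n:=\max_{1\le L\le L_n}\|\sum_{l=1}^L c_{n,l}d_{n,l}\|$. Choose an integer $M_n>nN_n$ and rewrite $v_n$ as the $M_nL_n$-term sum $\sum_{m=1}^{M_n}\sum_{l=1}^{L_n}(c_{n,l}/M_n)d_{n,l}$, ordered lexicographically in $(m,l)$. The partial sum of this expansion at position $(m,l)$ equals $\frac{m-1}{M_n}v_n+\frac{1}{M_n}\sum_{l'\le l}c_{n,l'}d_{n,l'}$, whose norm is at most $\|v_n\|+N_n/M_n\le 3/2+1/n$. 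Now form the candidate frame by listing, block by block in $n$ and in the $(m,l)$ order within each block, the pairs $\bigl(d_{n,l},(c_{n,l}/M_n)v_n^*\bigr)$. A complete block $n$ contributes $v_n^*(x)\,v_n$, so the partial sums at block boundaries equal $\sum_{m\le n}v_m^*(x)v_m$ and converge to $x$ by the basis property of $(v_n)$. For a partial sum in the middle of block $n$, the deviation from the preceding block boundary has norm at most $|v_n^*(x)|(3/2+1/n)$, which tends to $0$ as $n\to\infty$ since $v_n^*(x)v_n\to 0$ in norm and $\|v_n\|$ is bounded below. Hence the partial sums converge in norm to $x$ for every $x\in X$, and every frame vector lies in $D$.

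The main obstacle is that an arbitrary $D$-representation of $v_n$ may have arbitrarily large partial-sum norms; the subdivision-and-interleaving trick above (splitting each summand into $M_n$ equal pieces and cycling through them) drives the partial-sum norm down to essentially $\|v_n\|$, at the price of using many more terms---harmless since a Schauder frame may be infinite.
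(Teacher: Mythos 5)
Your proposal is correct and follows essentially the same route as the paper: perturb the basis into $\mathrm{span}(D)$ via the small-perturbation (Krein--Milman--Rutman) principle, write each perturbed basis vector as a finite combination of elements of $D$, split that representation into many equally weighted repetitions paired with scalar multiples of the biorthogonal functionals, and note that intra-block partial sums are controlled by the vanishing coefficient $v_n^*(x)$ times a bounded quantity. The only (harmless) difference is bookkeeping: the paper takes the $x_{j,n}$ linearly independent and scales by the reciprocal of a bound on their finite basis constant, while you scale by $M_n>nN_n$ with $N_n$ the maximal partial-sum norm of the chosen representation, which avoids any appeal to linear independence.
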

\begin{proof}
As  $(e_j)_{j=1}^\infty$ is a Schauder basis of $X$,  there exists $\vp_j\searrow0$ such that if $(u_j)_{j=1}^\infty\subseteq X$ and $\|e_j-u_j\|<\vp_j$ for all $j\in\N$ then $(u_j)_{j=1}^\infty$ is a Schauder basis of $X$.  As the span of $D$ is dense in $X$ we may choose $(u_j)_{j=1}^\infty\subseteq span(D)$ such that $\|e_j-u_j\|<\vp_j$ for all $j\in\N$.  Let $(u_j^*)_{j=1}^\infty$ be the sequence of biorthogonal functionals to $(u_j)_{j=1}^\infty$.  For each $n\in\N$, we  may choose a linearly independent and finite ordered set $(x_{j,n})_{j\in J_n}$ in $D$ such that $u_n$ can be expressed as the finite sum $u_n=\sum_{j\in J_n}a_{j,n}x_{j,n}$  where $a_{j,n}$ are non-zero scalars.

Let $C_n$ be the basis constant of $(x_{j,n})_{j\in J_n}$ and choose $N_n\in\N$ such that $C_n\leq N_n$.   We currently have that $u_n$ may be uniquely expressed as $u_n=\sum_{j\in J_n}a_{j,n}x_{j,n}$, but to make a Schauder frame we will use the redundant expansion $u_n=\sum_{i=1}^{N_n} \sum_{j\in J_n} N_n^{-1} a_{j,n}x_{j,n}$.  
 We claim that $((x_{j,n},N_n^{-1} a_{j,n} u_n^*))_{n\in\N,1\leq i \leq N_n,j\in J_n}$ is a Schauder frame of $X$ where we order $\{(n,i,j)\}_{n\in\N,1\leq i \leq N_n,j\in J_n}$ lexicographically.  That is, $(n_1,i_1,j_1)\leq (n_2,i_2,j_2)$ if and only if 
\begin{enumerate}
\item $n_1<n_2$, or
\item $n_1=n_2$ and $i_1<i_2$, or
\item $n_1=n_2$ and $i_1=i_2$ and $j_1\leq j_2$.
\end{enumerate}

Let $x\in X$ and $\vp>0$.  Choose $N\in\N$ such that $\|\sum_{n=m_1}^{m_2} u_n^*(x) u_n\|<\vp $ for all $m_2\geq m_1\geq N$.   Consider a fixed $(n_0,i_0,j_0)$ with $n_0> N$, $j_0\in J_{n_0}$, and $1\leq i_0\leq N_{n_0}$.  We now have that,
\begin{align*}
&\Big\|x-\sum_{(n,i,j)\leq (n_0,i_0,j_0)} N_n^{-1} a_{j,n} u_n^*(x) x_{j,n}\Big\|\\
 &\leq \Big\|x-\sum_{n=1}^{n_0-1} \sum_{i=1}^{N_n}  \sum_{j\in J_n}N_n^{-1} a_{j,n} u_n^*(x) x_{j,n}\Big\|+\Big\|\sum_{i=1}^{i_0-1}\sum_{j\in J_{n_0}}N_{n_0}^{-1} a_{j,n_0} u_{n_0}^*(x) x_{j,n_0}\Big\|+\Big\|\sum_{j=1}^{j_0}  N_{n_0}^{-1} a_{j,n_0} u_{n_0}^*(x) x_{j,n_0}\Big\|\\
 &\leq \Big\|x-\sum_{n=1}^{n_0-1} u_n^*(x) u_{n}\Big\|+\sum_{i=1}^{i_0-1} N_{n_0}^{-1}\|u_{n_0}^*(x) u_{n_0}\Big\|+\Big\|\sum_{j=1}^{j_0}  N_{n_0}^{-1} a_{j,n_0} u_{n_0}^*(x) x_{j,n_0}\Big\|\\
 &< \vp+\vp+C_{n_0}\Big\|\sum_{j\in J_{n_0}}  N_{n_0}^{-1} a_{j,n_0} u_{n_0}^*(x) x_{j,n_0}\Big\|\\
 &= \vp+\vp+C_{n_0}N_{n_0}^{-1} \|u_{n_0}^*(x) u_{n_0}\|\\
 &< \vp+\vp+\vp\hspace{5cm}\textrm{ as }C_{n_0}\leq N_{n_0}.\\
\end{align*}
We have that $\sum_{(n,i,j)} N_n^{-1} a_{j,n} u_n^*(x) x_{j,n}$ converges to $x$, and hence $((x_{j,n},N_n^{-1} a_{j,n} u_n^*))_{n\in\N,1\leq i \leq N_n,j\in J_n}$ is a Schauder frame of $X$.

\end{proof}

The previous theorem applied to Banach spaces with a Schauder basis, and we now show that the same conclusion can be obtained for separable Banach spaces with the bounded approximation property.

\begin{cor}\label{C:pos_frame}
Let $X$ be a separable Banach space with the bounded approximation property (i.e. $X$ has a quasi-basis).  Suppose that $D\subseteq X$ is a subset whose span is dense in $X$.  Then there exists a Schauder frame (quasi-basis) for $X$ whose vectors are elements of $D$.
\end{cor}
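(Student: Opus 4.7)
The plan is to reduce to Theorem \ref{T:1} by first producing a Schauder frame $(v_n,f_n)_{n=1}^\infty$ for $X$ from the bounded approximation property, then perturbing the frame vectors so they lie in $\spa(D)$, and finally invoking the redundant-expansion trick of Theorem \ref{T:1} verbatim. Since the bounded approximation property is (by the parenthetical definition) equivalent to the existence of a quasi-basis, one may fix such a Schauder frame $(v_n,f_n)_{n=1}^\infty$ for $X$; in particular the partial-sum operators $P_m x:=\sum_{n=1}^m f_n(x)v_n$ are uniformly bounded in $m$ and converge in norm to $x$ for each $x\in X$.

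Next, I would perturb $(v_n)$ into $\spa(D)$ while keeping the frame structure intact. Because $\spa(D)$ is dense in $X$, one may select $u_n\in\spa(D)$ with $\|u_n-v_n\|<\vp_n$, where the $\vp_n$ are chosen so small that $\sum_n \|f_n\|\vp_n<1$ (terms for which $f_n=0$ may simply be dropped). Define $U:X\to X$ by $Ux:=\sum_n f_n(x)u_n$. The series converges because $\sum_n f_n(x)(u_n-v_n)$ is absolutely convergent, and the estimate $\|U-I\|\le \sum_n\|f_n\|\vp_n<1$ makes $U$ a bounded invertible operator on $X$. Setting $g_n:=f_n\circ U^{-1}\in X^*$, the partial sums $\sum_{n=1}^m g_n(x)u_n=\sum_{n=1}^m f_n(U^{-1}x)u_n$ converge to $U(U^{-1}x)=x$, so $(u_n,g_n)_{n=1}^\infty$ is a Schauder frame for $X$ all of whose vectors lie in $\spa(D)$.

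The remainder of the argument copies the proof of Theorem \ref{T:1} with $(u_n,g_n)$ in place of a basis and its biorthogonal functionals. For each $n$, write $u_n=\sum_{j\in J_n}a_{j,n}x_{j,n}$ as a finite linear combination of linearly independent vectors from $D$, let $C_n$ be the basis constant of the finite ordered sequence $(x_{j,n})_{j\in J_n}$, choose $N_n\ge C_n$, and form the lexicographically ordered system $((x_{j,n},\,N_n^{-1}a_{j,n}g_n))_{n\in\N,\,1\le i\le N_n,\,j\in J_n}$ via the redundant expansion $u_n=\sum_{i=1}^{N_n}\sum_{j\in J_n}N_n^{-1}a_{j,n}x_{j,n}$. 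The identical partial-sum estimate from the end of the proof of Theorem \ref{T:1} then shows that these sums converge to $x$; it uses only that $\sum_{n<n_0}g_n(x)u_n\to x$ and that $\|g_n(x)u_n\|\to 0$, both of which are immediate consequences of $(u_n,g_n)$ being a Schauder frame. The main obstacle is the perturbation step: one must verify that replacing $v_n$ by $u_n\in\spa(D)$ still yields a frame, which is precisely why the quantitative choice $\sum\|f_n\|\vp_n<1$ and the resulting Neumann-series invertibility of $U$ are needed. Once this is in place, the redundant-expansion argument of Theorem \ref{T:1} transfers without modification.
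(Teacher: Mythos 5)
Your proof is correct, but it takes a genuinely different route from the paper. The paper's proof stays at the level of Theorem \ref{T:1} as a black box: it uses Pelczynski's theorem to realize $X$ as a complemented subspace of a space $Y$ with a Schauder basis, applies Theorem \ref{T:1} in $Y$ to the spanning set $D\cup(I_Y-P)Y$, and then uses the fact that a Schauder frame projects to a Schauder frame of a complemented subspace, so the pairs coming from $(I_Y-P)Y$ collapse to zero and the surviving vectors lie in $D$. You instead work entirely inside $X$: you start from a quasi-basis $(v_n,f_n)$ of $X$, perturb its vectors into $\spa(D)$ with $\sum_n\|f_n\|\vp_n<1$ so that $U x=\sum f_n(x)u_n$ is invertible by a Neumann series, pass to the frame $(u_n, f_n\circ U^{-1})$, and then rerun the redundant-expansion estimate of Theorem \ref{T:1}, correctly observing that this estimate never uses biorthogonality --- only norm convergence of the frame partial sums and the fact that the individual terms $\|g_{n}(x)u_{n}\|$ tend to zero. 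In effect you prove the stronger statement that Theorem \ref{T:1} holds with ``Schauder basis'' replaced by ``Schauder frame,'' which buys you independence from the superspace $Y$ and from Pelczynski's complementation theorem; the paper's route buys brevity, since it reuses Theorem \ref{T:1} verbatim and replaces your perturbation step by the one-line remark about projecting frames. One cosmetic point: if some $v_n$ (hence possibly $u_n$) is zero, the expansion of $u_n$ as a combination of linearly independent elements of $D$ with nonzero coefficients is vacuous; such pairs contribute nothing and can simply be discarded, just as you discard pairs with $f_n=0$.
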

\begin{proof}
As $X$ is separable and has the bounded approximation property there exists a Banach space $Y$ with a basis such that $X\subseteq Y$ and there is a bounded projection $P:Y\rightarrow X$.  As the span of $D$ is dense in $X$, the span of $D\cup (I_Y-P)Y$ is dense in $Y$, where $I_Y$ is the identity operator on $Y$.  By Theorem \ref{T:1}, there exists a Schauder frame $(x_j, f_j)_{j=1}^\infty\cup(y_j,g_j)_{j=1}^\infty$ for $Y$, where $x_j\in D$ and $y_j\in  (I_Y-P)Y$ for all $j\in\N$.  The projection of a Schauder frame onto a complemented subspace is a Schauder frame for that subspace.  Thus,  $(Px_j, f_j|_{X})_{j=1}^\infty\cup(Py_j,g_j|_X)_{j=1}^\infty$ is a Schauder frame for $X$.  This is the same as, $(x_j, f_j|_X)_{j=1}^\infty\cup(0,g_j|_X)_{j=1}^\infty$.  Hence,  $(x_j, f_j|_X)_{j=1}^\infty$ is a Schauder frame of $X$ whose vectors are in $D$.
\end{proof}

We now give the following application to translations of a single positive vector.

\begin{cor}\label{C:trans_pos}
For all $1\leq p<\infty$, the Banach space $L_p(\R)$ has a Schauder frame of the form  $(x_j, f_j)_{j=1}^\infty$ where $(x_j)_{j=1}^\infty$ is a sequence of translates of a single non-negative function.  In the range $1<p<\infty$ this function can be taken to be the indicator function of a bounded interval in $\R$, and for $p=1$ the function can be any non-negative function whose Fourier transform has no real zeroes.
\end{cor}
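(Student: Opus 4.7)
The plan is to invoke Corollary~\ref{C:pos_frame}. For any $1 \leq p < \infty$, the space $L_p(\R)$ is separable and has a Schauder basis (for instance, an enumeration of Haar bases on the intervals $[n,n+1]$, $n\in\Z$), so it has the bounded approximation property. Given a non-negative $f \in L_p(\R)$ of the stated form and a countable dense sequence $(\lambda_j)_{j=1}^\infty$ in $\R$, set $D = \{T_{\lambda_j} f : j \in \N\}$. If $\mathrm{span}(D)$ is dense in $L_p(\R)$, then Corollary~\ref{C:pos_frame} produces a Schauder frame for $L_p(\R)$ whose vectors are drawn from $D$, i.e.\ translates of the single non-negative function $f$. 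Because $\lambda \mapsto T_\lambda f$ is norm-continuous from $\R$ into $L_p(\R)$, density of $\mathrm{span}(D)$ is equivalent to density of $\mathrm{span}\{T_\lambda f : \lambda \in \R\}$, so the corollary reduces to verifying this latter density for each appropriate $f$.

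For $1 < p < \infty$ I would take $f = 1_I$ for any bounded interval $I$, and argue by duality. Without loss of generality take $I = [0,1]$, and let $q = p/(p-1) \in (1,\infty)$. If $g \in L_q(\R)$ satisfies
\begin{equation*}
\langle T_\lambda f, g \rangle = \int_\lambda^{\lambda+1} g(x)\,dx = 0 \quad \text{for every } \lambda \in \R,
\end{equation*}
then Lebesgue differentiation in $\lambda$ forces $g(\lambda+1) = g(\lambda)$ almost everywhere. Hence $g$ is $1$-periodic, and since $q < \infty$, any nonzero $1$-periodic measurable function has infinite $L_q(\R)$-norm. Thus $g = 0$, and Hahn--Banach gives that the translates of $1_{[0,1]}$ span densely in $L_p(\R)$.

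For $p = 1$ the duality step above breaks down, since $L_\infty(\R)$ does contain nonzero $1$-periodic functions; accordingly, I would invoke Wiener's Tauberian theorem, which asserts that the translates of $f \in L_1(\R)$ span a dense subspace precisely when $\hat{f}$ has no real zeros. Taking any non-negative $f$ satisfying this hypothesis (for example, a Gaussian) supplies the required density and, through Corollary~\ref{C:pos_frame}, the desired Schauder frame.

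The substantive point is the density verification. The mild subtlety in the range $1 < p < \infty$ is that $\hat{1}_{[0,1]}$ vanishes at every nonzero integer, so a direct Wiener-type Fourier argument is unavailable; the duality argument succeeds anyway because the only potential obstruction, namely a nonzero periodic $L_q$ function, cannot exist when $q < \infty$. This is exactly what makes the indicator of a bounded interval a legitimate choice for every $1 < p < \infty$, while $p = 1$ genuinely requires the nonvanishing Fourier transform hypothesis.
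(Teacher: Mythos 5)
Your proposal is correct, and for the substantive step --- density of the span of translates of the indicator of a bounded interval in $L_p(\R)$, $1<p<\infty$ --- it takes a genuinely different route from the paper. You work on the dual side: if $g\in L_q(\R)$ annihilates every translate, then $\int_\lambda^{\lambda+1}g=0$ for all $\lambda$, and differentiating this absolutely continuous function of $\lambda$ gives $g(\lambda+1)=g(\lambda)$ a.e., so $g$ is $1$-periodic and hence $0$, since a nonzero periodic function has infinite $L_q(\R)$-norm for $q<\infty$; Hahn--Banach then yields density. The paper instead stays on the primal side: it forms the telescoping sums $\sum_{j=0}^{n}T_j\bigl(1_{(0,1]}-T_\varepsilon 1_{(0,1]}\bigr)=1_{(0,\varepsilon]}-1_{(n+1,n+1+\varepsilon]}$, observes that these converge weakly to $1_{(0,\varepsilon]}$ because $1_{(n+1,n+1+\varepsilon]}\rightharpoonup 0$ in $L_p$ for $p>1$, passes from weak to norm closure by convexity, and then builds indicators of arbitrary bounded intervals from short ones. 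Your argument is shorter and isolates exactly why $p>1$ matters (no nonzero periodic element of $L_q$ with $q<\infty$), while the paper's argument is more explicit, exhibiting concrete approximants, and fails at $p=1$ for the parallel reason that the telescoping tails do not tend weakly to zero there. The remaining ingredients --- reducing everything to Theorem \ref{T:1} (or Corollary \ref{C:pos_frame}) and invoking Wiener's Tauberian theorem for $p=1$ --- coincide with the paper's proof. Two harmless remarks: restricting to a countable dense set of translation parameters is unnecessary, since Theorem \ref{T:1} accepts any set $D$ with dense span; and your reduction to $I=[0,1]$ is legitimate, as the same periodicity argument applies verbatim to an interval of any length.
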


\begin{proof}
We first consider the case $p=1$.  Let $f\in L_1(\R)$. By Wiener's tauberian theorem, the set of translations of $f$ has dense span in $L_1(\R)$ if and only if the Fourier transform of $f$ has no real zeroes \cite{W}.  Thus by Theorem \ref{T:1} if the Fourier transform of $f$ has no real zeroes then there exists a sequence of translations $(x_j)_{j=1}^\infty$ of $f$ and a sequence of linear functionals $(f_j)_{j=1}^\infty$ such that $(x_j, f_j)_{j=1}^\infty$ is a Schauder frame of $L_1(\R)$.  As an example of a function $f\in L_1(\R)$ such that $\hat{f}$ has no real zeroes, one can take $f(t)=e^{-t^2}$ for all $t\in\R$.

We now fix $1<p<\infty$ and consider the interval $(0,1]\subseteq\R$.  Note that the span of the indicator functions of bounded intervals in $\R$ is dense in $L_p(\R)$.  Thus we just need to prove that every indicator function of a bounded interval is in the closed span of the translates of $(0,1]$ and then apply Theorem \ref{T:1} to get a Schauder frame of translates of the indicator function of $(0,1]$.  Let $D\subseteq L_p(\R)$ be the span of the set of translates of $1_{(0,1]}$.  

Let $1>\vp>0$.  For each $\lambda\in\R$, we denote $T_\lambda:L_p(\R)\rightarrow L_p(\R)$ to be the operator which shifts functions $\lambda$ to the right.  That is, for all $f\in L_p(\R)$, $T_\lambda f(t)=f(t-\lambda)$ for all $t\in\R$.   Let $x_1= 1_{(0,1]}-T_\vp1_{(0,1]}=1_{(0,\vp]}-1_{(1,1+\vp]}$.  Thus, $x_1\in D$.  For $n\in\N$, we define $x_{n+1}\in D$ by 
$$x_{n+1}=\sum_{j=0}^{n} T_{j}x_1=\sum_{j=0}^n 1_{(j,j+\vp]}-1_{(j+1,j+1+\vp]}=1_{(0,\vp]}-1_{(n+1,n+1+\vp]}.$$  

As $1<p<\infty$, the sequence $(x_n)_{n=1}^\infty$ converges weakly to $1_{(0,\vp]}$.  Thus, $1_{(0,\vp]}$ is in the weak-closure and hence norm-closure of $D$ as $D$ is convex.  This proves that every indicator function of an interval of length at most $1$ is contained in $\overline{D}$.  As every bounded interval is the disjoint union of finitely many intervals of length at most $1$, we have that the indicator function of any bounded interval is contained in $\overline{D}$.
\end{proof}

When using a Schauder basis or Schauder frame to reconstruct a vector in a Banach space, we have that the partial sums of the series in \eqref{E:basis} and \eqref{E:frame} converge in norm.  A Banach lattice is a Banach space endowed with an appropriate partial order.  For example $L_p(\R)$ is a Banach lattice with the partial order given for $f,g\in L_p(\R)$ by  $f\leq g$ if and only if $f(t)\leq g(t)$ for a.e. $t\in \R$.  When considering Banach lattices, one cares about both the norm structure of the Banach space as well as the endowed order structure.  This leads us to consider Schauder bases and Schauder frames where the partial sums of the reconstruction formula converge in order as well as in norm.

Let $(y_n)_{n=1}^\infty$ be a sequence in a Banach lattice $X$.  We say that $(y_n)_{n=1}^\infty$ {\em converges uniformly} to $y$ and write $y_n\xrightarrow{u} y$ if there exists a positive vector $w\in X$ such that for all $\vp>0$ there exists $N\in\N$ such that $|y-y_n|\leq \vp w$ for all $n\geq N$. The vector $w$ is called a {\em regulator} of the sequence $(y_n)_{n=1}^\infty$.  Let $(x_j)_{j=1}^\infty$ be a Schauder basis for a Banach lattice $X$ with biorthogonal functionals $(x_j^*)_{j=1}^\infty$.  We say that $(x_j)_{j=1}^\infty$ is a {\em bibasis} for $X$ if for all $x\in X$ we have that $\sum_{j=1}^n x_j^*(x) x_j\xrightarrow{u} x$.  Similarly, let $(x_j, f_j)_{j=1}^\infty$ be a Schauder frame for a Banach lattice $X$.   We say that $(x_j,f_j)_{j=1}^\infty$ is a {\em u-frame} for $X$ if for all $x\in X$ we have that $\sum_{j=1}^n f_j(x) x_j\xrightarrow{u} x$.  The difference between the two names (bibasis and u-frame) is that the bibasis condition is equivalent to multiple different properties \cite[Theorem 3.1]{TT} or \cite[Theorem 20.1]{T}, whereas this is not the case in the context of frames. 

We now next extend Theorem~\ref{T:1} to the setting of Banach lattices with a bibasis. 

\begin{thm} \label{T:1b}
Let $X$ be a Banach lattice with a bibasis $(e_j)_{j=1}^\infty$.  Suppose that $D\subseteq X$ is a subset whose span is dense in $X$.  Then there exists a u-frame for $X$ whose vectors are elements of $D$.
\end{thm}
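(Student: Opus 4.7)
The plan is to adapt the proof of Theorem~\ref{T:1} to the lattice setting, upgrading norm convergence of partial sums to uniform (order) convergence. The first step is a perturbation-stability result for bibases: there should exist $\varepsilon_j \searrow 0$ such that whenever $\|e_j - u_j\| < \varepsilon_j$ for all $j$, the sequence $(u_j)_{j=1}^\infty$ is again a bibasis of $X$. I expect this to follow from the equivalent characterizations of bibases catalogued in \cite[Theorem~3.1]{TT} or \cite[Theorem~20.1]{T}. As in Theorem~\ref{T:1}, I would then pick such $(u_j) \subseteq \mathrm{span}(D)$, expand each $u_n$ as a finite combination $u_n = \sum_{j \in J_n} a_{j,n} x_{j,n}$ with $x_{j,n} \in D$, select large redundancy factors $N_n$ to be determined, and form the lexicographically ordered system $\bigl(x_{j,n},\, N_n^{-1} a_{j,n}\, u_n^*\bigr)_{n,i,j}$. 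Theorem~\ref{T:1} already guarantees this is a Schauder frame once $N_n$ exceeds the basis constant of $(x_{j,n})_{j \in J_n}$; what remains is to promote Schauder frame convergence to u-frame convergence.

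The analogue of the norm estimate in Theorem~\ref{T:1} is a lattice estimate. For the partial sum $T_{(n_0,i_0,j_0)}(x)$ one has
\[
|x - T_{(n_0,i_0,j_0)}(x)| \;\leq\; |x - S_{n_0-1}(x)| \;+\; |u_{n_0}^*(x)\,u_{n_0}| \;+\; N_{n_0}^{-1}\,|u_{n_0}^*(x)|\,v_{n_0},
\]
where $S_n(x) := \sum_{k=1}^n u_k^*(x)\,u_k$ and $v_n := \sum_{j \in J_n} |a_{j,n}|\,|x_{j,n}|$ dominates every partial sum $\bigl|\sum_{k \in J_n,\, k \leq j} a_{k,n} x_{k,n}\bigr|$ in the lattice. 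Since $(u_j)$ is a bibasis, there is a regulator $w_x$ with $|x - S_n(x)| \leq \varepsilon w_x$ for $n$ large, and telescoping gives $|u_n^*(x)\,u_n| \leq 2\varepsilon w_x$ for $n$ large, so the first two summands are controlled by $3\varepsilon w_x$.

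The heart of the proof is controlling the third summand $N_{n_0}^{-1}|u_{n_0}^*(x)|\,v_{n_0}$, which does not directly inherit from $w_x$ because $v_n$ need not lie in the ideal generated by $|u_n|$. My plan is to grow $N_n$ aggressively: using that $|u_n^*(x)|\,\|u_n\|$ is bounded by some $B_x$ (from basicity of $(u_n)$), I would choose $N_n$ so large that $N_n^{-1}\,v_n/\|u_n\|$ is dominated in order by $2^{-n}\hat w$ for a single positive $\hat w \in X$; then $N_n^{-1}|u_n^*(x)|\,v_n \leq 2^{-n} B_x \hat w$, and the combined regulator $w_x + B_x\hat w$ establishes the u-frame condition. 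The main obstacle is producing $\hat w$ as a genuine element of $X$, i.e.\ arranging for the positive sequence $\bigl(2^n N_n^{-1}\,v_n/\|u_n\|\bigr)_{n=1}^\infty$ to have an order-upper bound. This requires choosing the perturbation $(u_j)$ together with its decompositions so that the ``lattice inefficiency'' ratios $v_n/\|u_n\|$ remain in a common principal ideal of $X$; the bibasis-stability characterizations should provide enough flexibility, for instance by first enumerating $D$ and approximating each $e_j$ using combinations drawn from an increasing exhaustion of principal ideals.
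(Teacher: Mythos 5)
Your outline follows the paper's proof up to the decisive step: perturb the bibasis into $\mathrm{span}(D)$ (the paper quotes \cite[Theorem 4.2]{TT} for stability of bibases under small perturbations, rather than the characterization theorems you point to), reuse the frame construction of Theorem \ref{T:1}, and reduce the u-frame estimate to the three terms $|x-S_{n_0-1}(x)|$, $|u_{n_0}^*(x)u_{n_0}|$, and the incomplete block $N_{n_0}^{-1}|u_{n_0}^*(x)|\,v_{n_0}$; the first two are handled exactly as you say. The genuine gap is the existence of the master regulator $\hat w$ dominating the sequence $2^n N_n^{-1} v_n/\|u_n\|$. You correctly identify this as the main obstacle, but you do not resolve it: your proposed fix is to re-engineer the choice of $(u_j)$ and of the decompositions $u_n=\sum_{j\in J_n}a_{j,n}x_{j,n}$ so that the ratios $v_n/\|u_n\|$ stay in a common principal ideal, via an ``increasing exhaustion of principal ideals.'' This is not carried out, and it is doubtful it can be in general: the vectors $x_{j,n}$ must be taken from the prescribed set $D$, and density of $\mathrm{span}(D)$ gives no control over which principal ideals its elements generate, nor need a general Banach lattice admit such an exhaustion into which the approximations can be forced.

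No re-engineering is needed, because the $N_n$ are chosen last and the obstacle dissolves by a one-line trick, which is what the paper does. Set $v:=\sum_{n=1}^\infty 2^{-n}\, v_n/\|v_n\|$ with $v_n=\sum_{j\in J_n}|x_{j,n}|$; this series converges absolutely in norm, so $v$ is a genuine positive element of $X$, and positivity of the summands gives $v_n/\|v_n\|\leq 2^n v$ for every $n$. Choosing $N_n\geq 4^n\|u_n^*\|\,\|v_n\|\,\max_{j\in J_n}|a_{j,n}|$ then yields $\bigl|\sum_{j\in I_n} N_n^{-1}a_{j,n}u_n^*(x)x_{j,n}\bigr|\leq 2^{-n}\|x\|\,v$ for every subset $I_n\subseteq J_n$, so the tail block is at most $\varepsilon v$ as soon as $2^{-n_0}\|x\|<\varepsilon$, and $v\vee w$ serves as the regulator for the whole partial-sum sequence. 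In short, any countable family of positive vectors is, after summable rescaling, order-dominated by a single vector obtained as a norm limit; once you insert this observation in place of your principal-ideal scheme, your argument closes and coincides with the paper's proof.
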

\begin{proof}
The proof begins analogously to Theorem \ref{T:1}, noting that small perturbations of bibases are bibases (\cite[Theorem 4.2]{TT}). 

We construct $(u_n)_{n=1}^\infty$ and $((x_{j,n})_{j\in J_n})_{n=1}^\infty$ as in the proof of Theorem \ref{T:1}. We currently have that $((x_{j,n},N_n^{-1} a_{j,n} u_n^*))_{n\in\N,1\leq i \leq N_n,j\in J_n}$ is a Schauder frame of $X$ in the lexicographical order whenever the $N_n$ are sufficiently large. We now need to show that it is a u-frame.  For each $n\in\N$, we define $v_n=\sum_{j\in J_n}|x_{j,n}|$. Let $v=\sum_{n=1}^\infty \frac{1}{2^n}\frac{v_n}{\|v_n\|}$ and choose $N_n\in \mathbb{N}$ such that $N_n\geq 4^n\|u_n^*\|\|v_n\| \max_{j\in J_n}|a_{j,n}|$. Then for each $x\in X$ and each subset $I_n$ of $J_n$, $|\sum_{j\in I_n}  N_{n}^{-1} a_{j,n} u_{n}^*(x) x_{j,n}|\leq \frac{1}{2^{n}}v\|x\|$. We claim that $((x_{j,n},N_n^{-1} a_{j,n} u_n^*))_{n\in\N,1\leq i \leq N_n,j\in J_n}$ is a u-frame of $X$ where, again, we order $\{(n,i,j)\}_{n\in\N,1\leq i \leq N_n,j\in J_n}$ lexicographically.  

Let $x\in X$ and  let $w\in X^+$ be a regulator for $\sum_{j=1}^n u_n^*(x)u_n\xrightarrow{u}x$. In particular, for all $\vp>0$ there exists $N\in\N$ such that $\frac{1}{2^N}\|x\|<\varepsilon$ and $|\sum_{n=m_1}^{m_2} u_n^*(x) u_n|\leq \vp w$ for all $m_2\geq m_1\geq N$.   Consider a fixed $(n_0,i_0,j_0)$ with $n_0> N$, $j_0\in J_{n_0}$, and $1\leq i_0\leq N_{n_0}$.  By analogous estimates one shows that 
\begin{align*}
&\left| x-\sum_{(n,i,j)\leq (n_0,i_0,j_0)} N_n^{-1} a_{j,n} u_n^*(x) x_{j,n}\right| \leq 3\varepsilon (v\vee w).
\end{align*}

Hence $\sum_{(n,i,j)} N_n^{-1} a_{j,n} u_n^*(x) x_{j,n}$ converges to $x$ uniformly with regulator $v\vee w$, proving that $((x_{j,n},N_n^{-1} a_{j,n} u_n^*))_{n\in\N,1\leq i \leq N_n,j\in J_n}$ is a u-frame of $X$.
\end{proof}

The Haar system is not a bibasis for $L_1(\R)$, but the Haar system is a bibasis for $L_p(\R)$  for the range $1<p<\infty$ \cite{GKP}.  Thus, the following corollary follows from Theorem \ref{T:1b} and Corollary \ref{C:trans_pos}.

\begin{cor}\label{C:trans_pos_u}
For all $1< p<\infty$, the Banach space $L_p(\R)$ has a u-frame of the form  $(x_j, f_j)_{j=1}^\infty$ where $(x_j)_{j=1}^\infty$ is a sequence of translates of a single non-negative function.  Furthermore, this function can be taken to be the indicator function of a bounded interval in $\R$.
\end{cor}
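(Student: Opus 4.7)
The plan is to deduce this corollary directly from Theorem~\ref{T:1b} together with Corollary~\ref{C:trans_pos}. The first step is to verify the hypothesis of Theorem~\ref{T:1b} for $X = L_p(\R)$, namely that $L_p(\R)$ admits a bibasis. For the range $1<p<\infty$, the Haar system is a bibasis for $L_p(\R)$ by the result of \cite{GKP} that is cited just before the statement, so no additional work is needed here.

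Next, I would take the set $D \subseteq L_p(\R)$ to be the collection of all translates of $1_{(0,1]}$, that is $D = \{T_\lambda 1_{(0,1]} : \lambda \in \R\}$. The argument given in the proof of Corollary~\ref{C:trans_pos} already shows, by forming the sequence $x_{n+1} = 1_{(0,\varepsilon]} - 1_{(n+1,n+1+\varepsilon]} \in \operatorname{span}(D)$ and using weak (and hence norm-closure via convexity) convergence to $1_{(0,\varepsilon]}$, that every indicator function of a bounded interval lies in $\overline{\operatorname{span}}(D)$. Since simple functions with bounded-interval support are dense in $L_p(\R)$ for $1<p<\infty$, the span of $D$ is dense in $L_p(\R)$.

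With the bibasis hypothesis verified and the density condition established, I would apply Theorem~\ref{T:1b} directly to the Banach lattice $X = L_p(\R)$ and the set $D$. The conclusion yields a u-frame $(x_j, f_j)_{j=1}^\infty$ for $L_p(\R)$ with every $x_j \in D$, i.e. every $x_j$ is a translate of $1_{(0,1]}$. This delivers both claims of the corollary simultaneously: the existence of a u-frame of translates of a single non-negative function, and the explicit choice of that function as an indicator function of a bounded interval.

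There is no real obstacle here; the corollary is entirely a packaging step. The only point worth flagging is that Theorem~\ref{T:1b} requires a bibasis rather than merely a Schauder basis, which is precisely why the statement is restricted to $1<p<\infty$: the Haar system fails to be a bibasis for $L_1(\R)$, so the same route does not yield the $p=1$ analogue.
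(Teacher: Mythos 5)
Your proposal is correct and follows essentially the same route as the paper: the corollary is deduced from Theorem~\ref{T:1b} applied to the bibasis of $L_p(\R)$ given by the Haar system (via \cite{GKP}) together with the density of the span of translates of $1_{(0,1]}$ already established in the proof of Corollary~\ref{C:trans_pos}. Your additional remarks on verifying the density hypothesis and on why $p=1$ is excluded match the paper's reasoning exactly.
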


\section{Open problems}

Johnson and Schechtman constructed a Schauder basis for  $L_1(\R)$ consisting of non-negative functions \cite{JS}, and in Theorem \ref{T:basisL2} we construct a Schauder basis for  $L_2(\R)$ consisting of non-negative functions. The following remaining cases are still open. 
\begin{problem}
Let $1<p<\infty$ with $p\neq 2$.  Does  $L_p(\R)$ have a Schauder basis  consisting of non-negative functions?
\end{problem}

In Theorem \ref{T:basisLp}, we showed that $L_p(\R)$ contains a basic sequence $(f_n)_{n=1}^\infty$ of non-negative functions such that $L_p(\R)$ embeds into the closed span of $(f_n)_{n=1}^\infty$.  
Furthermore, the proof gives that for all $\vp>0$,  $(f_n)_{n=1}^\infty$ can be chosen to be $(2+\vp)$-basic.  

\begin{problem}
Let $1\leq p<\infty$ with $p\neq 2$.  For all $\vp>0$, does  $L_p(\R)$ contain a $(1+\vp)$-basic sequence $(f_n)_{n=1}^\infty$ of non-negative functions such that $L_p(\R)$ embeds into the closed span of $(f_n)_{n=1}^\infty$? What is the infimum of the set of all basis constants of  non-negative bases in $L_1(\R)$?
\end{problem}

The questions about non-negative bases in $L_p(\R)$ that are considered here and in \cite{PS} naturally extend to general Banach lattices. We say that a Schauder basis $(x_n)_{n=1}^\infty$ of a Banach lattice is {\em positive} if $x_n\geq 0$ for all $n\in\N$.  We say that a Schauder basis $(x_n)_{n=1}^\infty$ has {\em positive biorthogonal functionals} if the biothorgonal functionals $(x_n^*)_{n=1}^\infty$ satisfy $x_n^*\geq 0$ for all $n\in\N$. 
In the case of $L_p(\mu)$ or $C([0,1])$,  Schauder bases of non-negative functions correspond exactly with Schauder bases of positive vectors.  The unit vector basis for $\ell_p$ is a positive Schauder basis for all $1\leq p<\infty$, and the Faber-Schauder system in $C([0,1])$ is a Schauder basis of non-negative functions \cite{F}.

The existence of positive bases in $L_1$ has the following application to the general theory of Banach lattices:
\begin{prop}
Every separable Banach lattice embeds lattice isometrically into a Banach lattice with a positive Schauder basis.
\end{prop}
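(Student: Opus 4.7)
The plan is to combine Corollary~\ref{C:pos_frame} with a Pelczynski-style envelope construction. Given a separable Banach lattice $X$, the positive cone $D=X_+$ has dense span in $X$ because every $x\in X$ decomposes as $x=x^+ - x^-$ with $x^{\pm}\in X_+$, so applying Corollary~\ref{C:pos_frame} with $D=X_+$ produces a positive Schauder frame $(x_j,f_j)_{j=1}^\infty$ for $X$ with each $x_j\in X_+$. (For separable Banach lattices lacking the BAP required by the Corollary, one first embeds $X$ lattice-isometrically into a separable Banach lattice that does possess BAP, for instance the Bochner space $L_1([0,1];X)$ via the constant-function map $x\mapsto \mathbf{1}_{[0,1]}\otimes x$.)

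Next, form Pelczynski's envelope $Y$: let $Y$ be the completion of the finitely-supported scalar sequences under the norm
\[
\|(a_j)\|_Y := \sup_k \Big\|\sum_{j\leq k} a_j x_j\Big\|_X,
\]
equipped with coordinate-wise lattice structure. Since each $x_j$ is positive in $X$, the coordinate unit vectors $(e_j)_{j=1}^\infty$ form a positive Schauder basis of $Y$, so $Y$ is a Banach lattice with a positive Schauder basis. Define $T\colon X\to Y$ by $Tx:=(f_j(x))_{j=1}^\infty$; the Schauder frame identity $x=\sum_j f_j(x) x_j$ shows $Tx\in Y$, and the partial-sum bounds show that $T$ is a bounded embedding with $\|x\|_X \le \|Tx\|_Y \le C\|x\|_X$ for the frame constant $C$.

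The main obstacle is upgrading $T$ to a lattice \emph{isometry}. With the coordinate-wise order on $Y$, $T$ preserves positivity only if each $f_j$ is a positive functional, and preserves finite lattice operations only if the $f_j$ are lattice homomorphisms; neither property is delivered automatically by Corollary~\ref{C:pos_frame}. The expected resolution is a lattice-refined version of the Corollary producing frames whose dual functionals are positive, constructed by replacing the perturbing basis used in the proof of Theorem~\ref{T:1} with a basis of positive elements having positive biorthogonals---leveraging the Johnson--Schechtman positive Schauder basis of $L_1(\mathbb{R})$ (or the positive basis of $L_2(\mathbb{R})$ from Theorem~\ref{T:basisL2}) in the enlarged ambient lattice used in Step~1. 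A secondary obstacle is that the envelope construction only provides a $C$-isomorphic embedding rather than an exact isometry; this is addressed by driving the frame constant to $1$ via a careful choice of parameters $\vp_j\searrow 0$ together with a suitable renorming (or inductive-limit argument) to pass from approximate to exact lattice isometry.
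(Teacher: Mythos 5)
Your argument has genuine gaps at each of the three places where the lattice structure actually matters, and the obstacles you flag are not incidental: they are where the proof would have to live. First, the Pelczynski envelope $Y$ with norm $\|(a_j)\|_Y=\sup_k\|\sum_{j\leq k}a_jx_j\|_X$ and coordinate-wise order is not a Banach lattice in general: coordinate-wise domination $|a_j|\leq|b_j|$ gives no control on partial sums unless the system $(x_j)$ is (essentially $1$-)unconditional, and unconditionality is precisely what positive frames cannot have ($L_p(\R)$ admits no unconditional Schauder frame with positive vectors \cite{PS}); the frames produced by Corollary~\ref{C:pos_frame} are built to be highly conditional. So the assertion that the coordinate vectors form a \emph{positive} Schauder basis of a Banach \emph{lattice} $Y$ is unjustified and generally false. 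Second, even granting a lattice structure on $Y$, the map $Tx=(f_j(x))$ is a lattice homomorphism only if every $f_j$ satisfies $|f_j(x)|=f_j(|x|)$, i.e.\ is a lattice homomorphism, which is far stronger than positivity and is not delivered by any refinement of Theorem~\ref{T:1} proposed here; likewise, driving the frame constant to $1$ gives at best an almost-isometric isomorphic embedding, not a lattice isometry, and the ``suitable renorming'' is exactly the step that would need a proof. Third, your patch for the bounded approximation property fails outright: $X$ is $1$-complemented in $L_1([0,1];X)$ (integrate over $[0,1]$ and identify the constants with $X$), so if $X$ lacks the BAP then so does $L_1([0,1];X)$; since separable Banach lattices without the approximation property exist (Szankowski), Corollary~\ref{C:pos_frame} cannot be applied to an arbitrary separable Banach lattice this way.

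For contrast, the paper's proof avoids frames entirely: it invokes the universal result of \cite{LLOT} that every separable Banach lattice embeds lattice isometrically into $C(\Delta,L_1)$, and then exhibits a positive Schauder basis of $C(\Delta,L_1)\cong C(\Delta)\otimes_\lambda L_1$ by taking the tensor products $d_i\otimes f_j$ of a positive basis $(d_i)$ of $C(\Delta)$ \cite{Sem} with the Johnson--Schechtman positive basis $(f_j)$ of $L_1$ \cite{JS}, ordered in the ``square'' enumeration for which Singer's theorem \cite{Sin} guarantees a Schauder basis of the injective tensor product. The lattice isometry is thus supplied wholesale by \cite{LLOT}, and positivity of the basis is immediate from positivity of the two factors; none of the obstructions above arises. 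If you want to salvage your route, you would need a target space whose order is compatible with a conditional positive system, which is essentially what the tensor-product construction provides and the coordinate-wise envelope does not.
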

\begin{proof}
It was shown in \cite{LLOT} that every separable Banach lattice embeds lattice isometrically into $C(\Delta,L_1)$, where $\Delta$ denotes the Cantor set and $C(\Delta,L_1)$ denotes the Banach space of continuous functions from $\Delta$ to $L_1$. Hence, it suffices to show that $C(\Delta,L_1)$ has a positive Schauder basis.\\
By \cite{JS}, $L_1$ has a basis $(f_j)$ of positive vectors, and by the proof of \cite[Proposition 2.5.1]{Sem}, $C(\Delta)$ has a basis $(d_i)$ of positive vectors. For each $i,j\in \N$, define $d_i \otimes f_j\in C(\Delta,L_1)$ via $(d_i \otimes f_j)(t)=d_i(t)f_j$ for all $t\in\Delta$. Clearly, $d_i\otimes f_j\geq 0$ in $C(\Delta,L_1)$.

Now note that $C(\Delta,L_1)$ is lattice isometric to $C(\Delta)\otimes_{\lambda} L_1$, the injective tensor product of $C(\Delta)$ and $L_1$.  We order the collection $(d_i\otimes f_j)_{i,j\in\N}$ into the sequence $(z_k)_{k=1}^\infty$ by $z_1=d_1\otimes f_1$ and for $k>1$ we let
\[z_k= \begin{cases} 
      d_i\otimes f_{n+1} \ \ \text{for}\  k=n^2+i\textrm{ where $i,n\in\N$ and }1\leq i\leq n+1, \\
      \
      d_{n+1}\otimes f_{n+1-i} \ \ \text{for}\  k=n^2+n+1+i\textrm{ where $i,n\in\N$ and }1\leq i\leq n. 
   \end{cases}
\]

Then \cite[Theorem 18.1 and Corollary 18.3]{Sin} guarantee that $(z_k)_{k=1}^\infty$ is a Schauder basis.
\end{proof}
We have given several examples of Banach lattices with positive bases ($L_1(\R)$, $L_2(\R),  C([0,1])$, $\ell_p$, $C(\Delta,L_1)$, etc.) By duality it is easy to see that $L_2(\R)$ has a basis with positive biorthogonal functionals, and using \cite[Proposition 10.1, p.~321]{Sin} one sees that if $K$ is compact, Hausdorff and $C(K)$ is infinite-dimensional then $C(K)$ cannot have a basis with positive biorthogonal functionals. Obviously, the spaces $\ell_p$ have a basis with positive biorthogonal functionals whenever $1\leq p<\infty$. A general question to pose is:

\begin{problem}
Give further examples of Banach lattices possessing positive bases and/or bases with positive biorthogonal functionals. Of particular interest are Banach lattices possessing bases but lacking positive bases.
\end{problem}

There are other weaker forms of coordinate systems for which one can impose positivity conditions. For example, we refer the reader to \cite[Remark 7.13]{TT} for questions regarding the structure of Banach lattices possessing FDDs with positivity properties on their associated projections.
Recall that a {\em Markushevich basis} of a Banach space $X$ is a biorthogonal system ($x_n, x_n^*)_{n=1}^\infty$ such that the closed span of $(x_n)_{n=1}^\infty$ is $X$ and the collection of  functionals $(x_n^*)_{n=1}^\infty$ separates the points of $X$.
Obviously, when $X$ is a Banach lattice one can put positivity conditions on $x_n$ and $x_n^*$, and in \cite{PS} it is shown that for all $1\leq p<\infty$, $L_p(\R)$ has a Markushevich basis consisting of non-negative functions. This leaves another general question:
\begin{problem}
Which separable Banach lattices have Markushevich bases consisting of positive vectors?  Which separable Banach lattices have Markushevich bases consisting of positive functionals?
\end{problem}

Suppose that $X$ is a Banach lattice with a Schauder frame $(x_j, f_j)_{j=1}^\infty$.  By splitting up each vector into its positive and negative parts, we obtain that the sequence of pairs $(x_1^+,f_1), (x_1^-,-f_1),(x_2^+,f_2),(x_2^-,-f_2),...$ is a Schauder frame of $X$ consisting of positive vectors.  Thus, every Banach lattice with a Schauder frame has a Schauder frame with positive vectors.  Similarly, every Banach lattice with a Schauder frame has a Schauder frame with positive functionals.
On the other hand, in \cite{PS} it is proven for all $1\leq p<\infty$ that $L_p(\R)$ does not have an unconditional Schauder frame  consisting of positive vectors. 
\begin{problem}\label{Q:lattice}
Which separable Banach lattices have an unconditional Schauder frame with positive vectors?  Which separable Banach lattices have an unconditional Schauder frame with positive functionals?
\end{problem}


\end{document}